\let\@font@warningori\@font@warning
\newcommand\shutup{\def\@font@warning##1{}}
\newcommand\youcanspeak{\let\@font@warning\@font@warningori}
\theoremstyle{theorem}
\newtheorem{theorem}{Theorem}
\newtheorem*{theorem*}{Theorem}
\newtheorem{lemma}[theorem]{Lemma}
\newtheorem*{lemma*}{Lemma}
\newtheorem*{claim*}{Claim}
\newtheorem*{conjecture*}{Conjecture}
\newtheorem*{problem*}{Problem}
\newtheorem{definition}{Definition}
\newtheorem*{definition*}{Definition}
\newtheorem*{fact*}{Fact}
\theoremstyle{remark}
\newtheorem*{remark*}{Remark}
\providecommand{\setR}{\mathbb{R}}
\newcommand{\polylog}{\ensuremath{\textrm{polylog}}}
\newcommand{\poly}{\ensuremath{\textrm{poly}}}
\newcommand{\R}{{\mathbb{R}}}
\newcommand{\vol}{\operatorname{vol}}
\newcommand{\width}{\textsc{width}}
\newcommand{\E}{\mathop{\mathbb{E}}}
\DeclareMathAlphabet{\pazocal}{OMS}{zplm}{m}{n}
\title{An Improved Deterministic Rescaling for Linear Programming Algorithms} 
\author{Rebecca Hoberg\thanks{Email: {\tt rahoberg@uw.edu}} \quad and \quad Thomas Rothvo{ss}\thanks{Email: {\tt rothvoss@uw.edu}. Supported by an Alfred P. Sloan Research Fellowship. Both authors supported by NSF grant 1420180 with title ``\emph{Limitations of convex relaxations in combinatorial optimization}''. File compiled on {\today, \currenttime}.} 
\vspace{2mm} \\ University of Washington, Seattle} 
\begin{document}

\maketitle
\thispagestyle{empty}

\begin{abstract}
The \emph{perceptron algorithm} for linear programming, arising from machine learning, has been around since the 1950s. While not a polynomial-time algorithm, it is useful in practice due to its simplicity and robustness. 
In 2004, Dunagan and Vempala showed that a \emph{randomized rescaling} turns the perceptron method into a polynomial time algorithm, and later Pe\~{n}a and Soheili gave a \emph{deterministic rescaling}. 
In this paper, we give a deterministic rescaling for the perceptron algorithm that improves upon the previous rescaling methods by making it possible to rescale much earlier. 
This results in a faster running time for the rescaled perceptron algorithm. 
We will also 
demonstrate that the same rescaling methods yield a polynomial time algorithm based on the \emph{multiplicative weights update} method.
This draws a connection to an area that has received a lot of recent
attention in theoretical computer science. 
\end{abstract}

\setcounter{page}{1}
\section{Introduction} 

One of the central algorithmic problems in theoretical computer science as well as 
in more practical areas like operations research is finding the solution to a 
\emph{linear program}
\begin{equation} \label{eq:StandardFormLP}
  \max\{ c^Tx \mid Ax \geq b \}
\end{equation}
where $A\in \R^{m\times n}$, $c\in \R^n$ and $b\in \R^m$.
On the theoretical side, linear programming relaxations are the backbone for many approximation algorithms~\cite{DesignOfApproximationAlgorithms-ShmoysWilliamson,ApproximationAlgorithmsBook-Vazirani2001}. On the practical side, many real-world problems  
can either be modeled as linear programs or they can be modeled
at least as integer linear programs; the latter ones are then solved using 
Branch \& Bound or Branch \& Cut methods. Both of these methods rely on 
repeatedly computing solutions to linear programs~\cite{IntegerProgramming-ConfortiCornuejolsZambelliBook2014}. 


The first algorithm for solving linear programs was the \emph{simplex method} due to Dantzig~\cite{dantzigsimplex}. While the method performs well in practice --- and is still 
the method of choice today --- for almost any popular pivoting rule one can
construct instances where the algorithm takes exponential time~\cite{kleeminty}.
In 1979, Khachiyan~\cite{khachiyanellipsoid,schrijverlp} developed the first polynomial-time algorithm. However, despite the desirable theoretical properties, Khachiyan's \emph{ellipsoid method} turned out to be too slow for practical applications.

In the 1980s, \emph{interior point methods} were developed which were efficient in theory and in practice. Karmarkar's algorithm has a running time of $O(n^{3.5}L)$, where $L$ is the number of bits in the input~\cite{karmarkaripm}. Since then, there have been many further improvements in interior point methods. As recently as 2015, it was shown that there is an interior-point method using only $\tilde{O}(\sqrt{\operatorname{rank}(A)} \cdot L)$\footnote{The $\tilde{O}$-notation suppresses any $\polylog(m,n)$ terms.} many iterations; this upper bound essentially matches known lower bound barriers~\cite{leesinford}. 

A common way to find a polynomial-time linear programming algorithm is with a greedy type procedure along with periodic rescaling \cite{Dadush:2016:RCD:2964664.2964667}.
One famous example of this is the \emph{perceptron algorithm}~\cite{RelaxationMethod-Agmon1954}, which we will focus on in this paper.
Instead of solving \eqref{eq:StandardFormLP} directly, this method finds a 
feasible point in the \emph{open polyhedral cone}
\begin{equation} \label{eq:ConicLP}
  P = \{ x \in \setR^n \mid Ax > \bm{0}\}
\end{equation}
where $A \in \setR^{m \times n}$ -- using standard reductions one can interchange 
the representations \eqref{eq:StandardFormLP} and \eqref{eq:ConicLP} with at most a linear overhead. 
The classical perceptron algorithm starts at the origin
and iteratively walks in the direction of any violated constraint. 
In the worst case this method is not polynomial time, but it is still useful due to its simplicity and robustness \cite{RelaxationMethod-Agmon1954}.
In 2004, Dunagan and Vempala~\cite{rescalingDV} showed that using a randomized rescaling 
procedure, the algorithm can be modified to find a point in \eqref{eq:ConicLP} in polynomial time. 
Explicitly, their algorithm runs in time $\tilde{O}(mn^4\log{\frac{1}{\rho}})$, where $\rho>0$ is the radius of the largest ball in the intersection of $P$ with the unit ball $B:=B(0,1)$.
A deterministic rescaling procedure was provided by Pe{\~n}a and Soheili in \cite{SmoothPerceptron-PenaSoheili-MathProg2016}. Their algorithm
uses an improved convergence of the perceptron algorithm based on \emph{Nesterov's smoothing technique}~\cite{ExcessiveGapTechnique-Nesterov-SIAMJO2005, smoothperceptronSP}. Overall, their algorithm takes time $\tilde{O}(m^2n^{2.5}\log\frac{1}{\rho})$.

Another classical LP algorithm that we will discuss in this paper is based on a very general algorithmic framework called the \emph{multiplicative weights update} (MWU) method. 
In its general form one imagines having $m$ \emph{experts} who each incur some \emph{cost} in a sequence of iterations. 
In each iteration we have to select a convex combination of experts so that
the expected cost is minimized, where we only have information on the past costs. 
The MWU method initially gives all experts the same weight and in each iteration the weight of 
expert $i$ is multiplied by 
$\exp(-\varepsilon \cdot \textrm{cost incurred by expert }i)$ where $\varepsilon$ is some
parameter. Then on average, the convex combination given by the weights will be nearly as good
 as the cost incurred by the best expert. 
MWU is an \emph{online algorithm} that does not need to know
the costs in advance, and it has numerous applications in machine learning, economics and theoretical computer science.  
In fact, MWU has been reinvented many times under different names in the literature.
Recent applications in theoretical computer science include 
finding fast approximations to maximum flows~\cite{ApproximationOfMaxFlowInUndirectedGraphs-CKMST-STOC2011}, 
multicommodity flows~\cite{MulticommodityFlow-via-MWU-GargKoenemannSICOMP2007,ApproxMultiCommodityFlows-Madry-STOC2010}, solving LPs~\cite{PackingCoveringLPs-PlotkinShmoysTardos1995}, 
and solving semidefinite programs~\cite{FastAlgorithmsSDPviaMWU-AroraHazanKale-FOCS2005}.
We refer to the survey of Arora, Hazan and Kale~\cite{MWU-Survey-Arora-HazanKale2012} for a detailed overview. 

When we apply the MWU framework to linear programming, the experts correspond to the linear constraints. 
Suppose we use this method to find a valid point in $P=\{x:Ax>\bm{0}\}$ where $\|A_i\|_2 = 1$ for every row $A_i$\footnote{Notice that normalizing the rows does not affect the feasible region.}. 
At iteration $t$, the cost associated with expert $i$ will be $\langle A_i,p^{(t)}\rangle$ for some vector $p^{(t)}$.
Therefore the weight of expert $i$ at time $T$ will be $e^{-\langle A_i,x\rangle}$ where $x=\sum_{t=1}^T\varepsilon^{(t)} p^{(t)}$.
The analysis of MWU consists of bounding the sum of the weights, which in this case is given by the \emph{potential function} $\Phi(x)=\sum_{i=1}^me^{-\langle A_i,x\rangle}$.
If we choose the update vector $p^{(t)}$ to be a weighted sum of constraints at every iteration, notice that the resulting walk in $\setR^n$ corresponds to gradient descent on $\Phi$ -- in this case MWU terminates in $\tilde{O}(\frac{1}{\rho^2})$ iterations.
However, $\rho$ need not be polynomial in the input size, and in fact this method is not polynomial time in the worst case. 
%
%

\subsection{Our contribution}


For reference, the general form for the rescaled LP algorithms we will present in this paper is given in Algorithm \ref{alg:fullalg}.

\begin{algorithm}
\caption{\label{alg:fullalg}}
\noindent FOR $\tilde{O}(n\log\frac{1}{\rho})$ phases DO:
\begin{enumerate} 
\item[(1)] {\bf Initial phase:} 
Either find $x\in P$ or provide a $\lambda\in \setR^m_{\ge 0}$, $\|\lambda\|_1=1$ with $\|\lambda A\|_2\le \Delta$. 
\item[(2)] {\bf Rescaling phase:} 
Find an invertible linear transformation $F$ so that $\vol(F(P)\cap B)$ is a constant fraction larger than $\vol(P\cap B)$. Replace $P$ by $F(P)$. 
\end{enumerate}
\end{algorithm}


Our technical and conceptual contributions are as follows:

\begin{itemize}

\item[(1)] \emph{Improved rescaling:} We design a rescaling method that applies for a parameter of
$\Delta=\Theta(\frac{1}{n})$, which improves over the threshold $\Delta=\Theta(\frac{1}{m\sqrt{n}})$ required by \cite{SmoothPerceptron-PenaSoheili-MathProg2016}. 
This results in a smaller number of iterations that are needed 
per phase until one can rescale the system.

\item[(2)] \emph{Rescaling the MWU method:} 
We show that in $\tilde{O}(1/\Delta^2)$ iterations the MWU method can be made to implement the initial phase of Algorithm \ref{alg:fullalg}.
The idea is that if gradient descent is making insufficient progress then the gradient must have small norm, and from this we can extract an appropriate $\lambda$.
In particular, combining this with our rescaling method, we obtain a polynomial time LP algorithm based on MWU.

\item[(3)] \emph{Faster gradient descent:} The standard gradient descent approach terminates in at most $\tilde{O}(1/\Delta^2)$ iterations,
which matches the first approach in \cite{SmoothPerceptron-PenaSoheili-MathProg2016}. 
The more recent work of Pe{\~n}a and Soheili~\cite{smoothperceptronSP} uses \emph{Nesterov's smoothing technique}
to bring
the number of iterations down to a linear term of $\tilde{O}(1/\Delta)$. We prove that 
essentially the same speedup can be obtained without modifying the
objective function
by projecting the gradient on a significant eigenspace of the Hessian.

\item[(4)] \emph{Computing an approximate John ellipsoid:} For a general convex body $K$, computing a John ellipsoid is equivalent to finding a linear transformation so that $F(K)$ is well rounded.
For our unbounded region $P$, our improved rescaling algorithm gives a linear transformation $F$ so that $F(P)\cap B$ is well-rounded. 
\end{itemize}

\section{Rescaling of the Perceptron Algorithm}

In this section we fix an initial phase for Algorithm \ref{alg:fullalg} -- in particular, the paper of Pe\~{n}a and Soheili gives a smooth variant of the perceptron algorithm that achieves the following guarantee: 
\begin{lemma}[\cite{SmoothPerceptron-PenaSoheili-MathProg2016}] \label{lem:perceptronphase}
In time $\tilde{O}(\frac{mn}{\Delta})$, either the smooth perceptron phase outputs $x\in P$ or it gives $\lambda\in \setR^m_{\ge 0}$ with $\|\lambda\|_1=1$ and $\|\lambda A\|_2\le \Delta$.
\end{lemma}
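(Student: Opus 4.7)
The plan is to recast feasibility as a saddle-point problem and solve it by Nesterov smoothing combined with an accelerated first-order method. Assuming the rows of $A$ are unit vectors (normalization does not change the feasible cone), a point of $P \cap B$ exists iff $\min_{\|x\|_2 \le 1}\max_i (-A_i x) < 0$, and by a Sion-minimax / LP-duality argument this minimax value equals $-\min_{\lambda \in \Delta_m} \|A^T \lambda\|_2$, where $\Delta_m$ denotes the probability simplex on $[m]$. Hence an algorithm that approximates the saddle value to additive error below $\Delta$ either produces a feasible $x \in P$ or certifies $\|\lambda A\|_2 \le \Delta$ with $\lambda \in \Delta_m$, which is exactly the dichotomy in the lemma.

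To obtain a fast first-order method I would replace the non-smooth inner max by the entropy-smoothed surrogate
\[
f_\mu(x) \;:=\; \mu \log \sum_{i=1}^m \exp(-A_i x / \mu),
\]
which satisfies $\max_i(-A_i x) \le f_\mu(x) \le \max_i(-A_i x) + \mu \log m$. A direct calculation yields $\nabla f_\mu(x) = -A^T \lambda(x)$, where $\lambda_i(x) \propto \exp(-A_i x/\mu)$ lies in $\Delta_m$, and $\nabla^2 f_\mu(x) \preceq \tfrac{1}{\mu} A^T \diag(\lambda(x)) A$, whose spectral norm is at most $\tfrac{1}{\mu}\sum_i \lambda_i(x)\|A_i\|_2^2 = \tfrac{1}{\mu}$. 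So $f_\mu$ is $L$-smooth on the unit ball with $L = 1/\mu$.

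Next, run Nesterov's accelerated gradient method on $f_\mu$ over $B$. After $T$ iterations it returns $x^{(T)}$ with $f_\mu(x^{(T)}) - \min_{\|x\|\le 1} f_\mu = O\bigl(1/(\mu T^2)\bigr)$, together with a dual certificate $\bar{\lambda} \in \Delta_m$ given by an appropriately weighted average of the softmax vectors $\lambda(x^{(t)})$ (Nesterov's excessive-gap / primal-dual construction). Choosing $\mu = \Theta(\Delta/\log m)$ and $T = \tilde\Theta(1/\Delta)$ keeps both the smoothing bias $\mu\log m$ and the optimization error $1/(\mu T^2)$ strictly below $\Delta$. Each iteration costs one multiplication by $A$ and one by $A^T$, i.e.\ $O(mn)$ operations, for a total running time $\tilde{O}(mn/\Delta)$.

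Finally, at termination inspect $A x^{(T)}$: if it is coordinatewise positive output $x^{(T)} \in P$; otherwise output $\bar\lambda$, for which the certified duality gap forces $\|\bar\lambda A\|_2 \le \Delta$. The main obstacle I expect is not any individual step but the careful coupling of $\mu$ and $T$ so that the smoothing bias and the Nesterov error both remain below $\Delta$ with explicit constants, and the verification that the accelerated updates actually produce a primal-dual pair living in $B \times \Delta_m$ (as opposed to only an unconstrained relaxation). This coupled analysis is the technical heart of \cite{SmoothPerceptron-PenaSoheili-MathProg2016} and is what allows the $1/\Delta$ dependence in the iteration count rather than $1/\Delta^2$.
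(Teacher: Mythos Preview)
The paper does not supply its own proof of this lemma: it is quoted as a black box from \cite{SmoothPerceptron-PenaSoheili-MathProg2016} and used only to instantiate the initial phase of Algorithm~\ref{alg:fullalg}. Your sketch is a faithful high-level account of precisely the method in that reference---entropy smoothing of the nonsmooth $\max_i(-A_ix)$, Nesterov acceleration with smoothness constant $L=1/\mu$, the coupling $\mu=\Theta(\Delta/\log m)$ and $T=\tilde\Theta(1/\Delta)$ to balance the $\mu\log m$ bias against the $O(1/(\mu T^2))$ optimization error, and extraction of the dual certificate $\bar\lambda$ from averaged softmax weights---so there is nothing to contrast; your outline and the cited source coincide.
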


We then focus on the rescaling phase of the algorithm. Our main result is that we are able to rescale with $\Delta=O(\frac{1}{n})$.

\begin{lemma}\label{lem:rescaling}
Suppose $\lambda\in \setR^m_{\ge 0}$ with $\|\lambda\|_1=1$ and $\|\lambda A\|_2\le O(\frac{1}{n})$.
Then in time $O(mn^2)$ we can rescale $P$ so that $\vol(P\cap B)$ increases by a constant factor.
\end{lemma}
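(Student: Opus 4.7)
The plan is to extract a rescaling direction from the $\lambda$-weighted second-moment matrix of the constraints and argue that a single rank-one update of the space yields the desired constant-factor volume gain.

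After normalizing $\|A_i\|_2 = 1$ (which does not affect $P$), I would form the PSD matrix
$$ M \;=\; A^T \operatorname{diag}(\lambda)\, A \;=\; \sum_{i=1}^m \lambda_i A_i A_i^T \;\in\; \R^{n\times n}. $$
Since $\|\lambda\|_1 = 1$, $\operatorname{tr}(M) = 1$, so the top unit eigenvector $v$ of $M$ satisfies $v^T M v = \sum_i \lambda_i \langle A_i, v\rangle^2 \ge 1/n$, marking $v$ as a direction along which the $\lambda$-weighted constraints are most concentrated. The complementary hypothesis $\|\lambda A\|_2 \le O(1/n)$ -- a quantitative near-infeasibility certificate -- says that $A^T\lambda$ is nearly zero, i.e.\ the $A_i$ sit in a balanced configuration. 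Together, these two facts say that the concentrated constraints actually \emph{pinch} $P$ in the $v$-direction rather than being collectively aligned elsewhere, so $v$ should be read as a direction in which $P\cap B$ is thin.

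I would then apply the rank-one rescaling $F = I + \alpha\, v v^T$ for a small absolute constant $\alpha > 0$. Then $|\det F| = 1 + \alpha$ and the new constraint rows are $A_i F^{-1} = A_i - \tfrac{\alpha}{1+\alpha}\langle A_i, v\rangle v$, so that each constraint is weakened in the $v$-direction in proportion to its alignment with $v$. Setting $E := F^{-1}(B)$ (the unit ball compressed by $1/(1+\alpha)$ in the $v$-direction), the change-of-variables identity gives
$$ \operatorname{vol}\bigl(F(P)\cap B\bigr) \;=\; |\det F|\cdot \operatorname{vol}(P\cap E) \;=\; (1+\alpha)\operatorname{vol}(P\cap E), $$
so it suffices to show $\operatorname{vol}(P\cap E) \;\ge\; c\,(1+\alpha)^{-1}\operatorname{vol}(P\cap B)$ for some absolute constant $c > 1$.

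The main obstacle is precisely this volume comparison. The key intuition is that since $v^T M v \ge 1/n$, the constraints force $P$ to be already thin in the $v$-direction -- concretely, the typical $v$-extent of $P\cap B$ should scale like $O(1/\sqrt n)$ -- so compressing $B$ by a constant factor in the $v$-direction removes only a small fraction of $\operatorname{vol}(P\cap B)$. Making this precise requires integrating over slices of $P\cap B$ orthogonal to $v$ and using both the trace bound on $M$ and the hypothesis $\|\lambda A\|_2 \le O(1/n)$ (for instance via $\sum_i \lambda_i \langle A_i, x\rangle \le O(1/n)$ for $x \in P\cap B$, which yields Markov-type control on how many constraints are loose at $x$). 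Once this is in hand, the running time $O(mn^2)$ is routine: forming $M$ costs $O(mn^2)$, computing the top eigenvector by power iteration or an $O(n^3)$ decomposition fits in budget, and applying $F^{-1}$ to the $m$ rows of $A$ takes $O(mn)$.
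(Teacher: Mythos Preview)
Your approach has a genuine gap: the top eigenvector $v$ of $M$ need not be a thin direction for $P$. The best pointwise bound your setup yields is, for $x\in P\cap B$,
\[
\delta_{\max}\,\langle v,x\rangle^2 \;\le\; x^T M x \;=\; \sum_i \lambda_i\langle A_i,x\rangle^2 \;\le\; \sum_i \lambda_i\langle A_i,x\rangle \;\le\; \|\lambda A\|_2 \;\le\; \frac{c}{n},
\]
so $\width(P,v)\le\sqrt{c/(n\delta_{\max})}$. This is $O(1/\sqrt n)$ only when $\delta_{\max}=\Omega(1)$; when the spectrum of $M$ is spread out ($\delta_{\max}=\Theta(1/n)$) it gives nothing. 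Concretely, take the $n{+}1$ vertices $v_0,\dots,v_n$ of a regular simplex in $\setR^n$, set $A_0=v_0$ and $A_i=v_i+\epsilon v_0$ (normalized) with $\epsilon=\Theta(1/n)$, and give $A_0$ slightly more weight (say $\lambda_0\approx 2/(n{+}1)$, the rest uniform). Then $\|\lambda A\|_2=\Theta(1/n)$, $P$ is a narrow sliver containing $v_0$, and $M\approx \tfrac{1}{n}I+\Theta(\tfrac{1}{n})\,v_0v_0^T$, so the unique top eigenvector is $v_0$ with $\delta_{\max}=\Theta(1/n)$. But $v_0\in P\cap B$, so $\width(P,v_0)=1$. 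Every point of $P\cap B$ sits near $v_0$, so your ``Markov-type'' slice argument fails too: compressing $B$ by a constant factor along $v_0$ wipes out essentially all of $P\cap B$.

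The paper gives two fixes, and you are very close to one of them. The first (Lemmas~\ref{lem:widthsubset}--\ref{lem:gaussian}) abandons $M$ and instead takes $c=\sum_{i\in J}\lambda_i A_i$ for $J=\{i:\langle A_i,g\rangle\ge 0\}$ with $g$ Gaussian; then $\|c\|_2\ge\Omega(1/\sqrt n)$ always, and $\width(P,c)\le \|\lambda A\|_2/\|c\|_2=O(1/\sqrt n)$. The second (Lemma~\ref{lem:altrescaling}) keeps your matrix $M$ but replaces the rank-one map $I+\alpha vv^T$ by the multi-rank map $F=(I+\alpha M)^{1/2}$ with $\alpha\le 1/\delta_{\max}$. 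The point is that $\det F\ge e^{\alpha/4}$ uses $\tr(M)=1$ (all eigendirections contribute), while $\|F(x)\|_2^2=\|x\|_2^2+\alpha\,x^TMx\le 1+\alpha\cdot c/n$ uses exactly the bound you identified; the loss on intersecting with $B$ is then only $e^{-\Theta(\alpha/n)}$. Your rank-one truncation throws away the determinant gain from the small eigenvalues while keeping the worst-case stretch, which is why it breaks when the spectrum is flat.
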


We introduce two new rescaling methods that achieve the guarantee of Lemma \ref{lem:rescaling}. First we show that we can extract a thin direction by sampling rows of $A$ using a \emph{random hyperplane}. The linear transformation that scales $P$ in that direction, corresponding to a \emph{rank-1 update}, will increase $\vol(P\cap B)$ by a constant factor.

Next we give an alternate rescaling which is no longer a rank-1 update but which has the potential to increase $\vol(P\cap B)$ by up to an exponential factor under certain conditions.
In addition, if we take an alternate view where the cone $P$ is left invariant and instead update the underlying \emph{norm}, we see that this rescaling consists of adding a scalar multiple of a particular Hessian matrix to the matrix defining the norm.
We also believe that this view is the right one to make potential use of the \emph{sparsity} of the underlying matrix $A$, which would be a necessity for any practically relevant LP optimization method. 

Combining Lemmas \ref{lem:perceptronphase} and \ref{lem:rescaling} gives us the following theorem:
\begin{theorem}\label{thm:maintheoremperceptron} 
There is an algorithm based on the perceptron algorithm
that finds a point in $P$ in time $\tilde{O}(mn^3\log(\frac{1}{\rho}))$.
\end{theorem}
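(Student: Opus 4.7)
The plan is to combine Lemma~\ref{lem:perceptronphase} and Lemma~\ref{lem:rescaling} directly inside Algorithm~\ref{alg:fullalg}, fixing the threshold parameter at $\Delta = \Theta(1/n)$. With that choice, each phase has two kinds of cost: the initial (smooth perceptron) phase takes $\tilde O(mn/\Delta) = \tilde O(mn^2)$ time by Lemma~\ref{lem:perceptronphase}, and the subsequent rescaling step takes $O(mn^2)$ time by Lemma~\ref{lem:rescaling}. So each phase costs $\tilde O(mn^2)$.

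Next I would bound the number of phases. The key invariant is that $\vol(P\cap B)$ is multiplied by at least some absolute constant $c > 1$ after every rescaling step in which the perceptron phase failed to produce a feasible $x\in P$. Standard volume bounds give an initial lower bound of the form $\vol(P\cap B) \ge \rho^{\Omega(n)}$ (since $P\cap B$ contains a ball of radius $\rho$ around some unit vector, whose intersection with $B$ has volume at least $c_0 \rho^n$ for a suitable constant), and a trivial upper bound $\vol(P\cap B) \le \vol(B) = O(1)$. Since each phase multiplies the volume by at least $c$, after at most $T = O\bigl(\log(1/\rho^n)/\log c\bigr) = \tilde O(n\log(1/\rho))$ phases the volume must have saturated; but saturation is impossible if no feasible point has been found, so by this point the initial phase of some iteration must have returned a point $x\in P$. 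This matches the phase count built into Algorithm~\ref{alg:fullalg}.

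Multiplying the per-phase cost by the number of phases gives the claimed total running time
\[
\tilde O(n\log(1/\rho)) \cdot \tilde O(mn^2) \;=\; \tilde O\bigl(m n^3 \log(1/\rho)\bigr).
\]

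The main technical content is already carried by the two lemmas, so the only real obstacle in this assembly step is making sure that the dichotomy ``feasible point found $\vee$ good $\lambda$ returned'' from Lemma~\ref{lem:perceptronphase} matches the hypothesis of Lemma~\ref{lem:rescaling} at the chosen $\Delta = \Theta(1/n)$, and that the volume growth argument is tight enough to give only $\tilde O(n\log(1/\rho))$ phases rather than something larger. Both are straightforward once the constants are set: pick $\Delta$ so that the constant in Lemma~\ref{lem:rescaling} is met, and use $\vol(P\cap B) \ge \rho^{\Omega(n)}$ to obtain the right logarithmic dependence.
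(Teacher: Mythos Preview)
Your proposal is correct and follows exactly the paper's approach: the paper proves the theorem simply by combining Lemma~\ref{lem:perceptronphase} and Lemma~\ref{lem:rescaling} inside Algorithm~\ref{alg:fullalg}, with the $\tilde O(n\log(1/\rho))$ phase bound coming from the same volume argument you spell out (the paper records $\mu(I)\ge\rho^n$ and $\mu\le 1$ explicitly in the alternate-view section). If anything, your write-up is more detailed than the paper's one-line derivation.
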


%

%
\subsection{Rescaling Using a Thin Direction}

In this section we will show how we can rescale by finding a direction in which the cone is \emph{thin}  -- see Figure~\ref{fig:widthpicture} for a visualization. First we give the formal definition of width.

\begin{definition} Define the width of the cone $P$ in the direction $c \in \setR^n \setminus \{ {\bm{0}} \}$ as
$$\width(P,c)=\frac{1}{\|c\|_2}\max_{x\in P\cap B} |\langle c,x\rangle |.$$
\end{definition}
\begin{figure}
\begin{center}
\psset{unit=2.5cm}
\begin{pspicture}(-1,-1)(1,1)
\pscircle[fillstyle=solid,fillcolor=black!10!white](0,0){1}
\pnode(0.93,0.66){A2}
\pnode(0.95,-0.36){B2}
\pscustom[fillstyle=vlines,linecolor=darkgray,hatchcolor=gray]{
\psline(0,0)(A2)
\nccurve[angleA=-75,angleB=65]{A2}{B2}
\psline(B2)(0,0)
}
\pswedge[fillstyle=solid,fillcolor=black!50!white,opacity=0.5](0,0){1}{-10}{20}
\cnode*(0,0){2.5pt}{origin}
\psline[linestyle=dashed](-1.2,0.34)(1.2,0.34)
\psline[linestyle=dashed](-1.2,-0.34)(1.2,-0.34)
\pnode(-0.2,0.34){c1} \pnode(-0.2,0.7){c2} \ncline{->}{c1}{c2} \nbput[labelsep=2pt]{$c$}
\pnode(0,0.34){c3} \ncline{<->}{origin}{c3} \naput[labelsep=2pt]{$\frac{1}{3\sqrt{n}} \geq \textsc{Width}(P,c)$}
\rput[c](0.7,0.05){$P \cap B$}
\cnode*(0.93,0.33){2.5pt}{A}
\cnode*(0.95,-0.18){2.5pt}{B}
\rput[l](1.0,0.7){$ F(P \cap B)$}
\nput[labelsep=2pt]{-90}{origin}{$0$}
\end{pspicture}
\caption{Visualization of width and the rescaling operation \label{fig:widthpicture}
}
\end{center}
\end{figure}
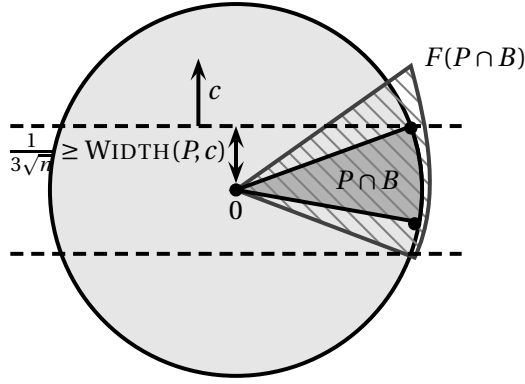

As described in \cite{SmoothPerceptron-PenaSoheili-MathProg2016}, we will now show that stretching $P$ in a thin enough direction increases the volume of $P\cap B$ by a constant factor.
We reproduce the argument of~\cite{SmoothPerceptron-PenaSoheili-MathProg2016} here for the sake of
completeness: 
\begin{lemma}[\cite{SmoothPerceptron-PenaSoheili-MathProg2016}]  \label{lem:volincrease}
Suppose that there is a 
direction $c \in \setR^n \setminus \{ \bm{0} \}$ with $\width(P,c) \leq \frac{1}{3\sqrt{n}}$.
Define $F : \setR^n \to \setR^n$ as the linear
map with $F(c) = 2c$ and $F(x) = x$ for all $x \perp c$. Then
\[
 \textrm{vol}(F(P) \cap B) \geq \frac{3}{2} \cdot \textrm{vol}(P \cap B).
\]
\end{lemma}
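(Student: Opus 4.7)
The plan is to compare $\vol(F(P)\cap B)$ with $\vol(F(P\cap B))$, which is easy to compute because $F$ is linear, and then exploit the thinness hypothesis to show that $F(P\cap B)$ is only slightly outside $B$, so shrinking it a tiny bit brings it inside $F(P)\cap B$.

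Without loss of generality normalize so that $\|c\|_2=1$. Since $F$ scales the $c$-direction by $2$ and fixes $c^\perp$, we have $|\det F|=2$, hence
\[
\vol(F(P\cap B)) \;=\; 2\cdot\vol(P\cap B).
\]
Next I will control how far $F(P\cap B)$ can stick out of $B$. For any $x\in P\cap B$, decompose $x=\langle c,x\rangle c+x^\perp$ with $x^\perp\perp c$; then $F(x)=2\langle c,x\rangle c+x^\perp$, so
\[
\|F(x)\|_2^2 \;=\; \|x\|_2^2+3\langle c,x\rangle^2 \;\le\; 1+3\cdot\tfrac{1}{9n}\;=\;1+\tfrac{1}{3n},
\]
using $\|x\|_2\le 1$ and the width bound $|\langle c,x\rangle|\le \width(P,c)\le \frac{1}{3\sqrt n}$. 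Therefore
\[
F(P\cap B)\;\subseteq\; F(P)\,\cap\,\sqrt{1+\tfrac{1}{3n}}\cdot B.
\]

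Now I use that $F(P)$ is a cone (since $F$ is linear and $P$ is a cone), so $F(P)$ is invariant under positive scaling. Shrinking the inclusion above by the factor $1/\sqrt{1+1/(3n)}$ gives
\[
\tfrac{1}{\sqrt{1+1/(3n)}}\cdot F(P\cap B)\;\subseteq\; F(P)\cap B.
\]
Taking volumes, together with the determinant computation above,
\[
\vol(F(P)\cap B)\;\ge\;\bigl(1+\tfrac{1}{3n}\bigr)^{-n/2}\cdot 2\cdot\vol(P\cap B)\;\ge\; 2e^{-1/6}\cdot\vol(P\cap B),
\]
and $2e^{-1/6}>\frac{3}{2}$ by a direct numerical check, finishing the proof.

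The argument is essentially a three-line calculation; the only mildly subtle step is recognizing that one should measure $F(P\cap B)$, observe that it only escapes $B$ by a factor $\sqrt{1+1/(3n)}$, and then rescale within the cone $F(P)$. There is no real obstacle here — the width threshold $\frac{1}{3\sqrt n}$ was precisely chosen so that the losses $(1+\tfrac{1}{3n})^{n/2}\le e^{1/6}$ still leave the determinant factor $2$ comfortably above $3/2$.
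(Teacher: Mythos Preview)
Your proof is correct and follows essentially the same approach as the paper: compute $\vol(F(P\cap B))=2\vol(P\cap B)$ via the determinant, bound $\|F(x)\|_2^2\le 1+\tfrac{1}{3n}$ for $x\in P\cap B$ using the width hypothesis, and then use that $F(P)$ is a cone to absorb the small overshoot by rescaling, losing only a factor of $(1+\tfrac{1}{3n})^{-n/2}\ge e^{-1/6}$. The paper's write-up differs only cosmetically (it bounds $\sqrt{1+1/(3n)}\le e^{1/(6n)}$ and uses $e^{-1/6}\ge \tfrac{3}{4}$ rather than your direct check $2e^{-1/6}>\tfrac{3}{2}$).
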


\begin{proof}
We may assume that $\|c\|_2 = 1$.
Since $\det(F)=2$, we know that $\vol(F(P\cap B))=2\vol(P\cap B)$.
Now suppose that $x \in P \cap B$ and write it as $x = x' + \left<c,x\right> \cdot c$ where $x' \perp c$. 
Then $\|F(x)\|_2^2 = \|x' + 2\left<c,x\right> \cdot c\|_2^2 = \|x'\|_2^2 + 4\left<c,x\right>^2 \le \|x\|_2^2 + 3 \cdot {\textsc{width}}(P,c)^2 \leq 1+\frac{1}{3n}$ and taking square roots gives $\|F(x)\|_2 \leq 1+\frac{1}{6n}\leq e^{1/6n}$.
In particular, we know that $F(P\cap B)\subseteq e^{1/6n} \cdot F(P) \cap B$, and so
 we have \begin{equation*}\vol(F(P)\cap B)\ge (e^{1/6n})^{-n}\cdot \vol(F(P\cap B))\ge \frac{3}{4}\vol(F(P\cap B) =\frac{3}{2}\vol(P\cap B).\end{equation*}
\end{proof}

Explicitly, assuming $\|c\|_2=1$, Lemma \ref{lem:volincrease} updates our constraint matrix to $A(I-\frac{1}{2}cc^T)$. In particular, we apply a \emph{rank-1 update} to the constraint matrix.
Given a solution $x$ to these new constraints, a solution to the original problem can be easily recovered as $(I-\frac{1}{2}cc^T)x$.

It remains to argue how one can extract a thin direction for $P$, given a convex combination 
$\lambda$ so that $\|\lambda A\|_2$ is small. Here we will significantly improve over
the bounds of~\cite{SmoothPerceptron-PenaSoheili-MathProg2016} which require $\|\lambda A\|_2 \leq O(\frac{1}{m\sqrt{n}})$.
We begin by a new generic argument to obtain a thin direction:
\begin{lemma}\label{lem:widthsubset} For any non-empty subset $J\subseteq [m]$ 
of constraints one has 
$$\width\Big(P, \sum_{i\in J}\lambda_iA_i\Big)\le \frac{\|\sum_{i=1}^m\lambda_iA_i\|_2}{\|\sum_{i\in J}\lambda_iA_i\|_2}.$$
\end{lemma}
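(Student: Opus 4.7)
The plan is to expand the width via its definition, and then bound the inner product $\langle c, x\rangle$ (where $c = \sum_{i \in J} \lambda_i A_i$) by the inner product of the full combination $c' = \sum_{i=1}^m \lambda_i A_i$ with $x$. The key observation is that for any $x \in P \cap B$ all constraints satisfy $\langle A_i, x\rangle \ge 0$, so enlarging the index set only makes the inner product larger.

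More concretely, I would set $c = \sum_{i \in J}\lambda_i A_i$ and $c' = \sum_{i=1}^m \lambda_i A_i$ and fix an arbitrary $x \in P \cap B$ (or a point in the closure, since $P$ is open and width is defined via a supremum). Since $\lambda_i \ge 0$ and $\langle A_i, x\rangle \ge 0$ for every $i$, each summand in $\langle c, x\rangle = \sum_{i \in J} \lambda_i \langle A_i, x\rangle$ is nonnegative, and the same is true of $\langle c', x\rangle - \langle c, x\rangle = \sum_{i \notin J} \lambda_i \langle A_i, x\rangle$. Consequently
\[
 0 \le \langle c, x\rangle \le \langle c', x\rangle.
\]

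Next I would apply Cauchy--Schwarz together with $\|x\|_2 \le 1$ to obtain $\langle c', x\rangle \le \|c'\|_2 \|x\|_2 \le \|c'\|_2$. Combining the two displays gives $|\langle c, x\rangle| = \langle c, x\rangle \le \|c'\|_2$. Taking the supremum over $x \in P \cap B$ and dividing by $\|c\|_2$ yields
\[
 \width(P, c) \;=\; \frac{1}{\|c\|_2} \sup_{x \in P \cap B} |\langle c, x\rangle| \;\le\; \frac{\|c'\|_2}{\|c\|_2},
\]
which is exactly the claimed inequality.

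There is no real obstacle here: the lemma is essentially a one-line monotonicity argument relying only on the sign constraints defining $P$ together with Cauchy--Schwarz. The only subtlety worth mentioning is that $P$ is open, so strictly speaking one should replace $\max$ by $\sup$ (or use the closure of $P$), but this does not affect the bound. Assuming $c \neq 0$, i.e.\ that $J$ is nontrivially supported on the nonzero entries of $\lambda$, the division by $\|c\|_2$ is legitimate.
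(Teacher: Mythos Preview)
Your proof is correct and follows essentially the same approach as the paper: use nonnegativity of $\lambda_i\langle A_i,x\rangle$ on $P$ to pass from the partial sum to the full sum, then bound by $\|\lambda A\|_2$ via Cauchy--Schwarz (the paper phrases this last step as $\max_{x\in B}\langle \lambda A,x\rangle=\|\lambda A\|_2$). You are in fact slightly more careful than the paper in explicitly addressing the absolute value in the width definition and the $\sup$ versus $\max$ issue for the open cone.
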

\begin{proof}
First, note that by the full-dimensionality of $P$, we always have 
$\|\sum_{i \in J} \lambda_i A_i\|_2 > 0$.
By definition of width, we can write 
$$\width\Big(P,\sum_{i\in J}\lambda_iA_i\Big)=\frac{1}{\|\sum_{i\in J}\lambda_iA_i\|_2}\max_{x\in P\cap B}\langle \sum_{i\in J}\lambda_iA_i,x\rangle.$$
Now, we know that $\langle A_i,x\rangle \geq 0$ for all $x\in P$ and so
\[
\max_{x\in P\cap B}\langle \sum_{i\in J}\lambda_iA_i,x\rangle\le  \max_{x \in B} \langle\sum_{i=1}^m \lambda_iA_i,x\rangle = \|\lambda A\|_2
\]
and the claim is proven. 
\end{proof}

So in order to find a direction of small width, it suffices to find a subset $J\subset [m]$ with $\|\sum_{i\in J}\lambda_iA_i\|_2$ large.
Implicitly, the choice that Pe{\~n}a and Soheili~\cite{SmoothPerceptron-PenaSoheili-MathProg2016}
make is to select $J=\{i_0\}$ for $i_0 \in [m]$ maximizing $\lambda_{i_0}$. This
approach gives a bound of $\|\sum_{i\in J}\lambda_iA_i\|_2\ge \frac{1}{m}$.
We will now prove the asymptotically optimal bound\footnote{It suffices here to consider the trivial example with $\lambda_1=\ldots = \lambda_n = \frac{1}{n}$ and $A_i = e_i$ being the standard basis. Then $\|\sum_{i \in J} \lambda_i A_i\|_2 \leq \frac{1}{\sqrt{n}}$ for any subset $J$. The optimality of our rescaling can also be seen since the cone in the last iteration is $\tilde{O}(n)$-well rounded, which is optimal up to $\tilde{O}$-terms.} using a random hyperplane:
\begin{lemma}\label{lem:gaussian}
Let $\lambda \in \setR_{\geq 0}^m$ be any convex combination and $A \in \setR^{m \times n}$
with $\|A_i\|_2 = 1$ for all $i$.
Take a random Gaussian $g$ and set $J := \{ i \in [m] \mid \left<A_i,g\right> \geq 0\}$.
Then with constant probability $\|\sum_{i \in J} \lambda_i A_i\|_2 \geq \frac{1}{4\sqrt{\pi n}}$.
\end{lemma}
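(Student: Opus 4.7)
The plan is to relate the random variable $\|\sum_{i\in J}\lambda_i A_i\|_2$ to the easier-to-control quantity $\langle g,\sum_{i\in J}\lambda_i A_i\rangle$. Write $W := \sum_{i\in J}\lambda_i A_i$. By the definition of $J$, only the indices with non-negative contribution are retained, so
\[
\langle g, W\rangle \;=\; \sum_{i\in J}\lambda_i\langle A_i,g\rangle \;=\; \sum_{i=1}^m \lambda_i \max(\langle A_i,g\rangle,0)\;=:\;Y.
\]
The key point is that since $\|A_i\|_2 = 1$, each $\langle A_i,g\rangle$ is a standard Gaussian, so $\E[\max(\langle A_i,g\rangle,0)] = \frac{1}{\sqrt{2\pi}}$ and therefore $\E[Y] = \sum_i \lambda_i\cdot\frac{1}{\sqrt{2\pi}}=\frac{1}{\sqrt{2\pi}}$ by linearity.

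Next I would obtain an upper bound on the second moment of $Y$ in order to invoke Paley--Zygmund. Since $\lambda$ is a convex combination, Jensen's (or Cauchy--Schwarz) gives
\[
Y^2 \;=\;\Bigl(\sum_i\lambda_i\max(\langle A_i,g\rangle,0)\Bigr)^{\!2}\;\le\;\sum_i\lambda_i\max(\langle A_i,g\rangle,0)^2,
\]
and since $\E[\max(\langle A_i,g\rangle,0)^2] = \tfrac12\E[\langle A_i,g\rangle^2]=\tfrac12$, we get $\E[Y^2]\le 1/2$. Then Paley--Zygmund with $\theta = 1/2$ yields
\[
\Pr\!\left[Y\ge\tfrac{1}{2\sqrt{2\pi}}\right] \;\ge\;\frac{(1-\tfrac12)^2(\E[Y])^2}{\E[Y^2]} \;\ge\;\frac{1}{4\pi}.
\]

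Finally, by Cauchy--Schwarz, $\|W\|_2 \ge \langle g,W\rangle/\|g\|_2 = Y/\|g\|_2$, so I need a concurrent upper bound on $\|g\|_2$. Since $\E[\|g\|_2^2]=n$ and $\mathrm{Var}(\|g\|_2^2)=2n$, Chebyshev (or Gaussian concentration applied to the $1$-Lipschitz function $g\mapsto\|g\|_2$) gives $\Pr[\|g\|_2\le\sqrt{2n}]\ge 1 - O(1/n)$. Union-bounding with the Paley--Zygmund event, both events hold simultaneously with probability at least $\frac{1}{4\pi}-O(1/n)$, which is bounded below by an absolute constant whenever $n$ exceeds a fixed threshold; for the remaining finitely many small values of $n$ the claimed bound $\frac{1}{4\sqrt{\pi n}}$ is at most a constant and can be verified directly (for instance, with $n=1$, picking the sign of $g$ selects the larger half of the $\lambda$-mass, giving $\|W\|_2\ge 1/2$). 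On the good event,
\[
\|W\|_2\;\ge\;\frac{Y}{\|g\|_2}\;\ge\;\frac{1/(2\sqrt{2\pi})}{\sqrt{2n}}\;=\;\frac{1}{4\sqrt{\pi n}},
\]
as desired.

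The main obstacle is the combination in the last step: Paley--Zygmund alone only yields probability $1/(4\pi)$, so a careless union bound with a weak Markov bound on $\|g\|_2$ would fail. It is therefore essential to use a sharper deviation bound (Chebyshev on $\|g\|_2^2$ or a Gaussian concentration inequality) to make the two events coexist with constant probability.
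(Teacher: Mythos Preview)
Your argument is correct and follows the same global structure as the paper's proof: lower-bound $\|W\|_2$ by $\langle g,W\rangle/\|g\|_2$, control $\|g\|_2$ via Chebyshev on the $\chi^2_n$ variable, and separately show the numerator is $\Omega(1)$ with constant probability. The genuine difference is in this last anti-concentration step. The paper works with the symmetric quantity $\sum_i\lambda_i|\langle A_i,g\rangle|$, observes it is a $1$-Lipschitz function of $g$ with mean $\sqrt{2/\pi}$, and appeals to the Sudakov--Tsirelson--Borell Gaussian concentration inequality to get a lower tail bound; a final symmetry argument (replacing $g$ by $-g$) then passes from $|\cdot|$ to the positive part. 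You instead work directly with $Y=\sum_i\lambda_i\max(\langle A_i,g\rangle,0)$, bound its second moment by Jensen, and apply Paley--Zygmund. Your route is more elementary---it avoids any Gaussian isoperimetry---and it also dispenses with the symmetry step since you handle the one-sided quantity from the start.

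One small point: your treatment of small $n$ is incomplete. Saying the finitely many remaining dimensions ``can be verified directly'' is not a finite check, since $m$, $\lambda$, and $A$ are arbitrary; you only actually verify $n=1$. This is easy to patch (for instance, replace Chebyshev by the sub-exponential tail of $\chi^2_n$, or by the Gaussian concentration bound on $\|g\|_2$ you already mention parenthetically, to get a failure probability strictly below $1/(4\pi)$ uniformly in $n$), and the paper's own proof has the same loose end with its constants, but as written your union bound only yields a positive constant once $n$ exceeds a threshold around $8\pi$.
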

\begin{proof} 
We set $v := \frac{g}{\|g\|_2}$. Since $v$ is unit vector we can lower bound the
length of $\|\sum_{i \in J} \lambda_i A_i\|_2$ by measuring the projection on $v$ and obtain
$
 \|\sum_{i \in J} \lambda_iA_i\|_2 \geq \sum_{j \in J} \lambda_i\left<A_i,v\right>.
$
By symmetry of the Gaussian it then suffices to argue that $\sum_{i=1}^m \lambda_i |\left<A_i,v\right>| \geq \frac{1}{2\sqrt{\pi n}}$.
First we will show that for an appropriate constant $\alpha\in(0,1)$,
\begin{enumerate}
\item[(1)] $\Pr(\|g\|_2\ge \sqrt{2n})\le \frac{2}{n}$
\item [(2)]$\Pr(\sum_{i=1}^n\lambda_i|\langle A_i,g\rangle |<\sqrt\frac{1}{2\pi})\le \alpha$.
\end{enumerate}
Then, with probability at least $\gamma=\frac{1-\alpha}{2}$, we have 
$\sum_{i=1}^n\lambda_i|\langle A_i,v\rangle |\ge \frac{1}{2\sqrt{\pi n}}.$

For (1), notice that $\|g\|_2^2$ is just the chi-squared distribution
with $n$ degrees of freedom, and so it has variance $2n$ and mean $n$. 
Therefore Chebyshev's inequality tells us that 
 $\Pr\left[\|g\|_2^2 \ge 2n\right]\le \frac{2}{n}$.
Now, for all $i$, $\langle A_i,g\rangle$ is a normal random variable with mean $0$ and variance $1$, 
and so the expectation of its absolute value is $\sqrt\frac{2}{\pi}$.
Summing these up gives $\E\left[\sum_{i=1}^m\left|\langle \lambda_i A_i,g\rangle\right|\right]=\sqrt \frac{2}{\pi}.$
Moreover, $\sum_{i=1}^m\left|\langle \lambda_i A_i,g\rangle\right|$ is Lipschitz in $g$ with Lipschitz constant $1$, 
and so\footnote{Recall that a function $F : \setR^n \to \setR$ is \emph{Lipschitz} with Lipschitz constant 1 if 
$|F(x)-F(y)| \leq \|x-y\|_2$ for all $x,y \in \setR^n$. A famous concentration inequality by Sudakov, Tsirelson, Borell states that $\Pr[|F(g)-\mu| \geq t] \leq e^{-t^2/\pi^2}$, where $g$ is a random Gaussian and $\mu$ is the mean of $F$ under $g$.} 
$$\Pr\left(\sum_{i=1}^m\left|\langle \lambda_i A_i,g\rangle\right|<\sqrt{\frac{2}{\pi}} - t\right)\le e^{-t^2/\pi^2}.$$ 
Letting $t=\sqrt \frac{1}{2\pi}$ gives (2).
By a union bound, the probability either of these events happens is at most $\alpha+\frac{2}{n}$, and so with probability at least $\frac{1-\alpha}{2}$ neither occurs, which gives us the claim. 
\end{proof}
While the proof is probabilistic, one can use the method of \emph{conditional expectation}
to derandomize the sampling~\cite{opac-b1088796}. 
More concretely, consider the function $F(g) := \sum_{i=1}^m \lambda_i |\left<A_i,g\right>| - \frac{1}{10\sqrt{n}} \|g\|_2$. The proof of Lemma~\ref{lem:gaussian}
implies that the expectation of this function is at least $\Omega(1)$. Then we can find a desired vector $g=(g_1,\ldots, g_n)$ by choosing the coordinates one after the other
so that the conditional expectation does not decrease.
We are now ready to prove Lemma \ref{lem:rescaling}, which we restate here with explicit constants. 

\begin{lemma}
Suppose $\lambda\in \setR_{\ge 0}^m$ with $\|\lambda\|_1=1$ and $\|\lambda A\|_2\le \frac{1}{12n\sqrt{\pi}}$.
Then in time $O(mn^2)$ we can rescale $P$ so that $\vol(P\cap B)$ increases by a constant factor.
\end{lemma}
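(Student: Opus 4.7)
The plan is to combine the three ingredients already established: Lemma~\ref{lem:widthsubset}, Lemma~\ref{lem:gaussian}, and Lemma~\ref{lem:volincrease}. First, I would normalize the rows of $A$ to have $\|A_i\|_2=1$; this does not change the cone $P$ and is needed to invoke Lemma~\ref{lem:gaussian}. The normalized constraint matrix and the same convex combination $\lambda$ still satisfy $\|\lambda A\|_2 \le \frac{1}{12n\sqrt{\pi}}$ up to the (harmless) renormalization overhead.

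Next I would apply Lemma~\ref{lem:gaussian} to extract, using the derandomized conditional-expectation version sketched after its proof, a subset $J \subseteq [m]$ such that
\[
 \Big\|\sum_{i \in J} \lambda_i A_i\Big\|_2 \;\ge\; \frac{1}{4\sqrt{\pi n}}.
\]
Set $c := \sum_{i \in J} \lambda_i A_i$. Combining this lower bound on $\|c\|_2$ with Lemma~\ref{lem:widthsubset} yields
\[
 \width(P,c) \;\le\; \frac{\|\lambda A\|_2}{\|c\|_2} \;\le\; \frac{1/(12n\sqrt{\pi})}{1/(4\sqrt{\pi n})} \;=\; \frac{1}{3\sqrt{n}},
\]
which is exactly the hypothesis needed to invoke Lemma~\ref{lem:volincrease}. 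Applying the linear map $F$ from that lemma — which scales by a factor of $2$ in the direction of $c$ and acts as the identity on $c^\perp$, i.e., a rank-$1$ update — gives $\vol(F(P)\cap B) \ge \tfrac{3}{2}\vol(P \cap B)$, as required.

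Finally I would account for the running time. Rank-$1$ updating all $m$ rows of the constraint matrix takes $O(mn)$, so the bottleneck is the derandomized construction of the subset $J$. The conditional-expectation procedure fixes the $n$ coordinates of $g$ one at a time, maintaining $\mathbb{E}[F(g)\mid g_1,\ldots,g_k]$ nondecreasing. Each conditional expectation reduces to evaluating sums of the form $\sum_{i=1}^m \lambda_i \,\mathbb{E}\big[|\langle A_i, g\rangle|\bigm|g_1,\ldots,g_k\big]$, which can be computed in $O(mn)$ time (each term needs the fixed partial inner product and the variance of the remaining Gaussian tail). Doing this for $n$ coordinates gives total time $O(mn^2)$.

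The main obstacle I expect is the runtime of the derandomization — in particular verifying that each step of the conditional-expectation walk can indeed be carried out in $O(mn)$ arithmetic operations; the constants in the probabilistic statement (so that the subset extraction really succeeds) are chosen precisely to make the $\|\lambda A\|_2 \le \tfrac{1}{12n\sqrt\pi}$ hypothesis match the $\tfrac{1}{3\sqrt n}$ width threshold, so the only delicate arithmetic is the threshold calculation shown above.
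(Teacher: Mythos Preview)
Your proposal is correct and follows essentially the same approach as the paper: combine Lemma~\ref{lem:gaussian} (to obtain the subset $J$), Lemma~\ref{lem:widthsubset} (to bound the width of $c=\sum_{i\in J}\lambda_iA_i$), and Lemma~\ref{lem:volincrease} (to rescale). The only cosmetic difference is that the paper's proof invokes the \emph{randomized} version of Lemma~\ref{lem:gaussian} --- sampling a Gaussian and checking the condition in $O(mn)$ time, repeating an expected constant number of times --- whereas you invoke the conditional-expectation derandomization directly; since the paper explicitly describes this derandomization right after Lemma~\ref{lem:gaussian}, this is not a genuine divergence. One minor remark: the paper works throughout under the standing assumption $\|A_i\|_2=1$, so your normalization step and the ``harmless renormalization overhead'' comment are unnecessary (and slightly imprecise, since renormalizing rows would change $\lambda A$ unless you also reweight $\lambda$).
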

\begin{proof}

Computing a random Gaussian and checking if it satisfies the conditions of Lemma \ref{lem:gaussian} takes time $O(mn)$. Since the conditions will be satisfied with constant probability, the expected number of times we must do this is constant.
Once the conditions are satisfied, finding 
a thin direction and rescaling can be done in time $O(n^3)$.
Lemmas \ref{lem:volincrease} and \ref{lem:widthsubset} guarantee we get a constant increase in the volume.
\end{proof}



\subsection{Deterministic Multi-rank Rescaling} 
We now introduce an alternate linear transformation we can use to rescale. 
This is no longer a rank-1 update, but it is inherently deterministic along with other nice properties. 
For one thing, although we only guarantee constant improvement in the volume, under certain circumstances the rescaling can improve the volume by an exponential factor.
This transformation will also take a nice form when we change the view to consider rescaling the unit ball rather than the feasible region.
 
\begin{lemma}\label{lem:altrescaling} Suppose $\lambda\in \setR^m_{\ge 0}$, $\|\lambda\|_1=1$ and $\|\lambda A\|_2\le \frac{1}{10n}$. Let $M$ denote the matrix $\sum_{i=1}^m\lambda_iA_iA_i^T$ and suppose $0\le \alpha\le \frac{1}{\delta_{\textrm{max}}}$, where $\delta_{\max}=\|M\|_{\textrm{op}}$ denotes the maximal eigenvalue of $M$.
Define $F(x)=(I+\alpha M)^{1/2}x$.
Then $\vol(F(P)\cap B)\ge e^{\alpha/5}\vol(P\cap B)$.
\end{lemma}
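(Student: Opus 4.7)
The plan is to mimic the two-step template from the proof of Lemma~\ref{lem:volincrease}: first compute how much $F$ blows up volumes, then bound how far $F$ pushes points of $P\cap B$ outside of $B$, and finally balance the two.

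First I would analyze the Jacobian. Since $F$ is linear with $F = (I+\alpha M)^{1/2}$, I get $\det(F) = \prod_i (1+\alpha \delta_i)^{1/2}$ where $\delta_1,\ldots,\delta_n$ are the eigenvalues of $M$. The crucial observation is that
\[
\tr(M) \;=\; \sum_{i=1}^m \lambda_i \|A_i\|_2^2 \;=\; 1,
\]
using $\|A_i\|_2 = 1$ (the standing normalization) and $\|\lambda\|_1=1$, so $\sum_i \delta_i = 1$. For each $i$ the hypothesis $\alpha \le 1/\delta_{\max}$ ensures $\alpha\delta_i \in [0,1]$, so the concavity estimate $\log(1+y) \ge y/2$ on $[0,1]$ yields
\[
\log \det(F) \;=\; \tfrac{1}{2}\sum_{i=1}^n \log(1+\alpha\delta_i) \;\ge\; \tfrac{1}{4}\alpha\sum_{i=1}^n \delta_i \;=\; \tfrac{\alpha}{4}.
\]

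Second I would show that $F$ barely enlarges the norms of points in $P\cap B$. For $x \in P \cap B$,
\[
\|F(x)\|_2^2 \;=\; x^T(I+\alpha M)x \;=\; \|x\|_2^2 + \alpha \sum_{i=1}^m \lambda_i \langle A_i, x\rangle^2.
\]
Since $\langle A_i,x\rangle \ge 0$ on $P$ and $\langle A_i,x\rangle \le \|A_i\|_2\|x\|_2 \le 1$ on $B$, I can drop one factor and estimate
\[
\sum_i \lambda_i \langle A_i,x\rangle^2 \;\le\; \sum_i \lambda_i \langle A_i,x\rangle \;=\; \langle \lambda A, x\rangle \;\le\; \|\lambda A\|_2 \;\le\; \tfrac{1}{10n}.
\]
Hence $\|F(x)\|_2 \le (1+\tfrac{\alpha}{10n})^{1/2} \le e^{\alpha/(20n)}$, so $F(P\cap B) \subseteq F(P) \cap B(0, e^{\alpha/(20n)})$.

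Finally I would combine the two estimates. Scaling the ball of radius $e^{\alpha/(20n)}$ back down to $B$ costs a factor of $e^{-n \cdot \alpha/(20n)} = e^{-\alpha/20}$ in volume, so
\[
\vol(F(P)\cap B) \;\ge\; e^{-\alpha/20}\,\vol(F(P\cap B)) \;=\; e^{-\alpha/20}\,\det(F)\,\vol(P\cap B) \;\ge\; e^{-\alpha/20}\,e^{\alpha/4}\,\vol(P\cap B),
\]
and the exponents add to $\alpha/5$, giving the claim. The main obstacle, modest as it is, is picking the right quantitative form of $\log(1+y)\ge y/2$ and making sure the ``trace equals one'' identity kicks in at exactly the right place so that the gain $\alpha/4$ from $\det(F)$ is large enough to absorb the loss $\alpha/20$ from re-intersecting with $B$; the condition $\alpha \le 1/\delta_{\max}$ is used precisely to keep each $\alpha\delta_i$ in the range where $\log(1+y)\ge y/2$ holds.
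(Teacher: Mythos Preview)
Your proposal is correct and follows essentially the same argument as the paper's proof: bound $\det(F)\ge e^{\alpha/4}$ via $1+\alpha\delta_i\ge e^{\alpha\delta_i/2}$ together with $\tr(M)=1$, bound $\|F(x)\|_2^2\le 1+\alpha/(10n)$ for $x\in P\cap B$ by dropping one factor of $\langle A_i,x\rangle$ and using $\|\lambda A\|_2\le 1/(10n)$, and then absorb the $e^{-\alpha/20}$ loss from re-intersecting with $B$. The only cosmetic difference is that you phrase the determinant bound via $\log(1+y)\ge y/2$ and the norm bound via $(1+y)^{1/2}\le e^{y/2}$, whereas the paper writes the equivalent inequalities in exponential form.
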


\begin{proof} First notice that $M$ is symmetric positive semi-definite with trace $1$. Therefore the eigenvalues of $I+\alpha M$ take the form $1+\alpha \delta_i$ where $0\le \delta_i\le \alpha$ and $\sum_{i=1}^n\delta_i=1$. Note that since $\alpha\delta_i\le 1$, we can lower bound the eigenvalues by $1+\alpha \delta_i\ge e^{\alpha\delta_i/2}.$
Therefore $$\det(I+\alpha M)\ge \prod_{i=1}^ne^{\alpha\delta_i/2}=\exp\Big(\frac{\alpha}{2}\sum_{i=1}^n\delta_i\Big)=e^{\alpha/2}.$$
In particular, $\det(F)\ge e^{\alpha/4}$.

So far we have shown that $\vol(F(P\cap B))$ is significantly larger than $\vol(P\cap B)$. However, the desired bound is on $\vol(F(P)\cap B)$, and so we need to ensure that we do not lose too much of the volume when we intersect with the unit ball. It turns out the bound on $\|\lambda A\|_2$ will allow us to do precisely this.

For any $x$, we get the bound
$$\|F(x)\|_2^2=x^T\Big(I+\alpha\sum_{i=1}^m \lambda_iA_iA_i^T\Big)x= \|x\|_2^2+\alpha \sum_{i=1}^m \lambda_i \langle A_i,x\rangle^2\le  \|x\|_2^2.$$
Now, if we assume that $x\in P\cap B$, this becomes
$$\|F(x)\|_2^2\le 1+\alpha \sum_{i=1}^m \lambda_i\langle A_i,x\rangle \le 1+\alpha\|\lambda A\|_2 \le 1+\frac{\alpha}{10n}.$$
The point is that every element of $F(P\cap B)$ has length at most $1+\frac{\alpha}{20n}$, and so intersecting with the unit ball will not lose more volume than shrinking by a factor of $1+\frac{\alpha}{20n}$. 
In particular, the volume decreases by at most $(1+\frac{\alpha}{20n})^{-n}\ge e^{-\alpha/20}$, and so we have 
$$\vol(F(P)\cap B)\ge e^{-\alpha/20}\vol(F(P\cap B))\ge e^{-\alpha/20}\cdot e^{\alpha/4} \vol(P\cap B) \ge e^{\alpha/5}\cdot \vol(P\cap B).$$
\end{proof}

Note that one always has $\delta_{\max} \leq 1$ and hence in any case one can choose $\alpha \geq 1$.
Therefore if $\|\lambda A\|_2\le \frac{1}{10n}$, we get constant improvement in $\vol(P\cap B)$. 
In fact, if the eigenvalues of $M$ happen to be small, we could get up to exponential improvement.
This computation can be carried out in time $O(mn^2)$ and so Lemma \ref{lem:altrescaling} proves Lemma \ref{lem:rescaling} and hence Theorem \ref{thm:maintheoremperceptron}.

\subsection{An Alternate View of Rescaling}
Obviously instead of applying a linear transformation
to the cone $P$ itself, there is an equivalent view where instead one applies
a linear transformation to the unit ball. 
We will now switch the view in the sense that we fix the
cone $P$, but we update the norm in each rescaling step so that the unit ball becomes more representative of $P$.

Recall that a symmetric positive definite matrix $H \in \setR^{n \times n}$ induces a \emph{norm}
$\|x\|_H := \sqrt{x^THx}$.
Note that also $H^{-1}$ is a symmetric positive definite matrix\footnote{An easy way to see this is to write 
$H = \sum_{j=1}^n \mu_j u_ju_j^T$ as the eigendecomposition of $H$. Then $H^{-1} = \sum_{j=1}^n \frac{1}{\mu_j} u_ju_j^T$ is the inverse; clearly all eigenvalues are positive and the inverse has the same spectrum as $H$.} 
and $\|\cdot\|_{H^{-1}}$ is the \emph{dual norm} of $\|\cdot \|_H$.
In this view we assume the rows $A_i$ of $A$ are normalized so that $\|A_i\|_{H^{-1}}=1$. 

Let $B_H := \{ x \in \setR^n \mid \|x\|_H \leq 1\}$ be the unit ball for the norm $\|\cdot \|_H$. Note that $B_H$ is always an
\emph{ellipsoid}.
We will measure progress in terms of the fraction
of the ellipsoid $B_H$ that lies in the cone $P$, namely $\mu(H) := \frac{\textrm{vol}(B_H \cap P)}{\textrm{vol}(B_H)}$.
The goal of the rescaling step will then be to increase $\mu(H)$ by a constant factor.
Note that we initially have $\mu(H)=\mu(I)\ge \rho^n$, and at any time $0\le \mu(H)\le 1$, so we can rescale at most $O(n\log{\frac{1}{\rho}})$ times. 

In this view, Lemma \ref{lem:altrescaling} takes the following form:
\begin{lemma}\label{lem:rescalingphaseH}
Let $H \in \setR^{n \times n}$ be symmetric with $H \succ 0$.
Suppose $\lambda \in \setR^{m}_{\geq 0}$ with $\|\lambda\|_1=1$ and $\|\lambda A\|_{H^{-1}} \leq \frac{1}{10n}$
and let $M := \sum_{i=1}^m \lambda_iA_iA_i^T$.
Let $0 \leq \alpha \leq \frac{1}{\delta_{\max}}$, where $\delta_{\max} := \|H^{-1}M\|_{\textrm{op}}$. Then for $\tilde{H} := H + \alpha M$ one has $\mu(\tilde{H}) \geq e^{\alpha / 5} \cdot \mu(H)$.
\end{lemma}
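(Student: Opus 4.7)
The plan is to reduce Lemma~\ref{lem:rescalingphaseH} to Lemma~\ref{lem:altrescaling} by the change of variables $y = H^{1/2} x$, which simultaneously turns the ellipsoid $B_H$ into the standard unit ball and rescales the norm $\|\cdot\|_{H^{-1}}$ into the Euclidean norm.

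Concretely, I would set $A_i' := H^{-1/2} A_i$ and $P' := \{y \in \setR^n \mid A'y > \bm{0}\}$. A direct computation shows $\|A_i'\|_2 = \sqrt{A_i^T H^{-1} A_i} = \|A_i\|_{H^{-1}} = 1$, so the rows of $A'$ are unit Euclidean vectors. Likewise $\|\lambda A'\|_2 = \|H^{-1/2}(\sum_i \lambda_i A_i)\|_2 = \|\lambda A\|_{H^{-1}} \leq \frac{1}{10n}$, so the hypothesis of Lemma~\ref{lem:altrescaling} holds in the $y$-coordinates. The matrix $M' := \sum_i \lambda_i A_i' (A_i')^T = H^{-1/2} M H^{-1/2}$ is similar to $H^{-1} M$, hence $\|M'\|_{\textnormal{op}} = \|H^{-1} M\|_{\textnormal{op}} = \delta_{\max}$, so the same $\alpha$ is admissible.

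Next I would relate the volume ratios $\mu(H)$ and $\mu(\tilde H)$ to their counterparts in $y$-coordinates. Under the linear map $x \mapsto y = H^{1/2} x$, the sets $B_H$ and $B_H \cap P$ map to the unit ball $B$ and to $B \cap P'$, respectively, so $\mu(H) = \vol(B \cap P') / \vol(B)$ since the Jacobian cancels. Setting $F := (I + \alpha M')^{1/2}$, the ellipsoid $B_{\tilde H} = \{x : x^T(H + \alpha M) x \leq 1\}$ maps to $\{y : y^T(I + \alpha M') y \leq 1\} = F^{-1}(B)$, and $B_{\tilde H} \cap P$ maps to $F^{-1}(B) \cap P' = F^{-1}(B \cap F(P'))$. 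The Jacobian $\det(F^{-1})$ again cancels, giving $\mu(\tilde H) = \vol(B \cap F(P'))/\vol(B)$.

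Combining these two identities yields
\[
\frac{\mu(\tilde H)}{\mu(H)} = \frac{\vol(F(P') \cap B)}{\vol(P' \cap B)},
\]
and Lemma~\ref{lem:altrescaling} applied to $P'$ with the data $A'$, $\lambda$, $M'$, $\alpha$ gives that this ratio is at least $e^{\alpha/5}$. The main thing to be careful about is bookkeeping: tracking that the Jacobians cancel in the correct places and confirming that the change of variables sends $B_H$ to $B$ and $B_{\tilde H}$ to $F^{-1}(B)$ (not $F(B)$). Everything else is a routine translation of the Euclidean statement into the $H$-weighted geometry.
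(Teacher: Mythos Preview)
Your proposal is correct and is exactly the argument the paper has in mind: the paper does not give a separate proof of Lemma~\ref{lem:rescalingphaseH} but simply introduces it as the form Lemma~\ref{lem:altrescaling} takes under the alternate view, and your change of variables $y=H^{1/2}x$ makes that translation precise. One small remark: the claim ``$M'$ is similar to $H^{-1}M$, hence $\|M'\|_{\textnormal{op}}=\|H^{-1}M\|_{\textnormal{op}}$'' is only literally true if $\|\cdot\|_{\textnormal{op}}$ on $H^{-1}M$ is read as the largest eigenvalue (equivalently, the $H$-operator norm); for the usual $\ell_2$ spectral norm one only gets $\|M'\|_{\textnormal{op}}\le\|H^{-1}M\|_{\textnormal{op}}$, but that inequality is the direction you need to transfer the constraint $\alpha\le 1/\delta_{\max}$ to Lemma~\ref{lem:altrescaling}, so the argument goes through either way.
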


Algorithm \ref{alg:fullalgH} illustrates what the multi-rank rescaling looks like under the alternate view. Notice that the algorithm updates the norm matrix by adding a scalar multiple of the Hessian matrix of the MWU potential function discussed in Section 3.
Moreover, throughout the algorithm our matrix $H$ will have the form $I+\sum_{i=1}^mh_iA_iA_i^T$ for some $h_i\ge 0$. Note that this allows fairly compact representation as 
we only need $O(m)$ space to encode the coefficients $h_i$ that define the norm matrix.

\begin{algorithm}
\caption{\label{alg:fullalgH}}
\noindent FOR $\tilde{O}(n\log\frac{1}{\rho})$ phases DO:
\begin{enumerate} 
\item[(1)] {\bf Initial phase:} 
Either find $x\in P$ or give $\lambda\ge 0$, $\|\lambda\|_1=1$ with $\|\lambda A\|_{H^{-1}}\le O(\frac{1}{n})$. 
\item[(2)] {\bf Rescaling phase:} 
Update $H:=H+\alpha M$, where $M=\sum_{i=1}^m\lambda_iA_iA_i^T$. 
\end{enumerate}
\end{algorithm}




%

%
%

\section{Rescaling for the MWU algorithm} 

In this section we show that the same rescaling methods can be used to make the MWU method into a polynomial time algorithm for linear programming.

Recall that the MWU algorithm corresponds to gradient descent on a particular potential function.
First we show how we can apply rescaling to the standard gradient descent approach.
We then introduce a modified gradient descent, which speeds up the MWU phase.
Combining this with our rescaling step above gives us the following result:

\begin{theorem}\label{thm:maintheoremmwu}
There is an algorithm based on the MWU algorithm
that finds a point in $P$ in time \\$\tilde{O}(mn^{\omega+1}\log(\frac{1}{\rho}))$,
where $\omega\approx 2.373$ is the exponent of matrix multiplication.
\end{theorem}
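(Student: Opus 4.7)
The plan is to instantiate Algorithm \ref{alg:fullalgH} with the MWU method as the initial phase and the multi-rank rescaling of Lemma \ref{lem:rescalingphaseH} in the rescaling phase. The outer loop runs for at most $\tilde{O}(n\log(1/\rho))$ phases, by the standard volume-fraction argument: $\mu(H)\in[\rho^n,1]$ and each rescaling multiplies it by a constant. It therefore suffices to implement a single phase in time $\tilde{O}(mn^\omega)$. The rescaling phase itself is only $O(mn^2)$ once we have $\lambda$ with $\|\lambda A\|_{H^{-1}}\le O(1/n)$: we assemble $M=\sum_i\lambda_iA_iA_i^T$ and set $H:=H+\alpha M$, so the initial phase dominates the budget.

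For the initial phase, gradient descent on the potential $\Phi(x)=\sum_i e^{-\langle A_i,x\rangle}$ is exactly MWU on the rows of $A$: the normalized weights $\lambda_i^{(t)}\propto e^{-\langle A_i,x^{(t)}\rangle}$ form the expert distribution, and the descent direction is (up to scaling) $\sum_i\lambda_i^{(t)}A_i$. A standard MWU analysis shows that if after $T$ steps $\Phi$ has not dropped enough to certify a strongly interior point of $P$, then $\lambda^{(T)}$ is a convex combination satisfying the required $\|\lambda A\|_{H^{-1}}=O(1/n)$ bound. Unaccelerated gradient descent on $\Phi$ needs $\tilde{O}(1/\Delta^2)=\tilde{O}(n^2)$ iterations; to hit the target we use the modified gradient descent sketched in the introduction, which projects the gradient onto the significant eigenspace of the Hessian $\nabla^2\Phi(x)=A^TD_xA$ (with $D_x=\diag(e^{-\langle A_i,x\rangle})$) and recovers the Nesterov-like rate of $\tilde{O}(1/\Delta)=\tilde{O}(n)$ iterations per phase without altering $\Phi$.

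The hard part is bounding the per-iteration cost. Computing the gradient is $O(mn)$, but naively assembling $A^TD_xA$ and diagonalizing it costs $O(mn^2+n^\omega)$ per step, which would give only $\tilde{O}(mn^4+n^{\omega+2})\log(1/\rho)$ in total. The plan is to instead assemble the Hessian via a rectangular fast matrix product of $n\times m$ by $m\times n$ shape, which in the regime $m\ge n$ costs $O(mn^{\omega-1})$ by block reduction to $\lceil m/n\rceil$ square multiplications of cost $O(n^\omega)$; the subsequent spectral work on the resulting $n\times n$ matrix fits within the same $O(n^\omega)$ budget, and since $m\ge n$ implies $mn^{\omega-1}\ge n^\omega$, each iteration costs $O(mn^{\omega-1})$. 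With $\tilde{O}(n)$ iterations this gives $\tilde{O}(mn^\omega)$ per phase and $\tilde{O}(mn^{\omega+1}\log(1/\rho))$ overall, as claimed. The remainder is bookkeeping: combine the $\tilde{O}(n\log(1/\rho))$ phase count with Lemma \ref{lem:rescalingphaseH} and the convergence rate of the modified gradient descent.
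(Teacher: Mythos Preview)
Your proposal is correct and mirrors the paper's argument: the $\tilde{O}(n\log(1/\rho))$-phase outer loop, the modified-gradient MWU phase of Lemma~\ref{lem:mwuphase} achieving $\tilde{O}(n)$ iterations via Theorem~\ref{thm:mwustep}, and the $O(mn^{\omega-1})$ per-iteration cost from forming $A^T D_x A$ by rectangular fast multiplication all match. One clarification worth making precise: the ``spectral work'' the paper actually performs is not an eigendecomposition but the repeated-squaring scheme $z_k=(I-N)^{2^k}z$ of Lemma~\ref{lem:computingplemma}, which is what keeps the $n\times n$ linear algebra at $\tilde{O}(n^\omega)$ rather than the $O(n^3)$ a full diagonalization would require.
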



\subsection{Standard Gradient Descent}
Consider the potential function $\Phi(x)=\sum_{i=1}^me^{-\langle A_i,x\rangle},$ where $\|A_i\|_2=1$ for all rows $A_i$.
Notice that $\Phi(0)=m$ and that if $\Phi(x)<1$ then $\langle A_i,x\rangle>0$ for all $i$, and hence $x\in P$. 
In this section we analyze standard gradient descent on $\Phi$, starting at the origin. 
Notice that the gradient takes the form $$\nabla \Phi(x)=-\sum_{i=1}^me^{-\langle A_i,x\rangle}A_i.$$
If we let $\lambda_i=\frac{1}{\Phi(x)}e^{-\langle A_i,x\rangle}$, we see that $\|\lambda\|_1=1$ and
$\lambda A=-\frac{\nabla \Phi(x)}{\Phi(x)}$.
In particular, if at any iteration this vector has small Euclidean norm, then we will be able to rescale.
It remains to show, therefore, that if this vector has large Euclidean norm, then we get sufficient decrease in the potential function.

\begin{lemma}\label{lem:phidecrease}
Suppose $x\in \setR^n$ and abbreviate $y=-\frac{\nabla\Phi(x)}{\Phi(x)}$.
Then 
$$\Phi(x + \frac{1}{2}y) \leq \Phi(x) \cdot e^{-\|y\|_2^2/4}.$$
\end{lemma}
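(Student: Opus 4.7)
The plan is to expand $\Phi(x+\tfrac{1}{2}y)$ summand by summand and use a quadratic upper bound on $e^{-s}$ that is valid in a small range, which we can afford because $y$ will turn out to have norm at most $1$. Set $\lambda_i := e^{-\langle A_i,x\rangle}/\Phi(x)$, so that $\lambda \geq 0$, $\|\lambda\|_1 = 1$, and $y = \sum_i \lambda_i A_i$. Since the rows are unit vectors, the triangle inequality gives $\|y\|_2 \leq \sum_i \lambda_i \|A_i\|_2 = 1$; in particular, for every constraint $i$ the scalar $s_i := \tfrac{1}{2}\langle A_i,y\rangle$ satisfies $|s_i| \leq \tfrac{1}{2}$.

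Next, I would invoke the elementary inequality $e^{-s} \leq 1 - s + s^2$ valid for $|s|\leq 1$, applied to each $s_i$. This yields
\[
\Phi\!\left(x+\tfrac{1}{2}y\right) \;=\; \sum_{i=1}^m e^{-\langle A_i,x\rangle}\, e^{-\langle A_i, y/2\rangle} \;\leq\; \sum_{i=1}^m e^{-\langle A_i,x\rangle}\Big(1 - \tfrac{1}{2}\langle A_i,y\rangle + \tfrac{1}{4}\langle A_i,y\rangle^2\Big).
\]
The linear term evaluates to $-\tfrac{1}{2}\langle -\nabla\Phi(x), y\rangle = -\tfrac{1}{2}\Phi(x)\langle y,y\rangle = -\tfrac{1}{2}\Phi(x)\|y\|_2^2$ by the very definition of $y$. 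The quadratic term is bounded by Cauchy--Schwarz: $\langle A_i,y\rangle^2 \leq \|A_i\|_2^2\|y\|_2^2 = \|y\|_2^2$, so this term is at most $\tfrac{1}{4}\Phi(x)\|y\|_2^2$.

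Combining these two bounds gives
\[
\Phi\!\left(x+\tfrac{1}{2}y\right) \;\leq\; \Phi(x)\Big(1 - \tfrac{1}{2}\|y\|_2^2 + \tfrac{1}{4}\|y\|_2^2\Big) \;=\; \Phi(x)\Big(1 - \tfrac{1}{4}\|y\|_2^2\Big),
\]
and then the standard estimate $1-t \leq e^{-t}$ converts this into the desired $\Phi(x+\tfrac{1}{2}y)\leq \Phi(x)\,e^{-\|y\|_2^2/4}$.

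The only non-routine step is picking the right second-order bound on the exponential: one needs $|s|\leq 1$ to use $e^{-s}\leq 1-s+s^2$, so I would emphasize at the start that $\|y\|_2\leq 1$ (hence $|s_i|\leq 1/2$) makes the argument go through, and it is precisely this uniform smallness that lets the linear term $-\tfrac{1}{2}\Phi(x)\|y\|_2^2$ dominate the quadratic correction $\tfrac{1}{4}\Phi(x)\|y\|_2^2$.
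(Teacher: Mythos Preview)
Your proof is correct and follows essentially the same approach as the paper: expand $\Phi(x+\tfrac12 y)$, apply the quadratic bound $e^{-s}\le 1-s+s^2$ (valid since $|\langle A_i,y\rangle|\le 1$), identify the linear term as $-\tfrac12\|y\|_2^2$ and bound the quadratic term by $\tfrac14\|y\|_2^2$ via $\langle A_i,y\rangle^2\le \|y\|_2^2$, then finish with $1-t\le e^{-t}$. The only cosmetic difference is that you first establish $\|y\|_2\le 1$ via the triangle inequality before invoking Cauchy--Schwarz, whereas the paper states $|\langle A_i,y\rangle|\le 1$ directly.
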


\begin{proof}
First note that since $\|\lambda\|_1=1$ and $\|A_i\|_2=1$, we know that $|\langle A_i,y\rangle|\le 1$ for all $i$. In our analysis we will also use the fact that for any $z\in \R$ with $|z|\le 1$ one has $e^z\le 1+z+z^2$. We obtain the following.

\begin{eqnarray*}
\Phi(x+\frac{1}{2}y) &=& \sum_{i=1}^m e^{-\langle A_i,x+\frac{1}{2} y\rangle}
= \sum_{i=1}^m e^{-\langle A_i,x\rangle }e^{-\frac{1}{2}\langle A_i,y\rangle}\\
&{\le} & \sum_{i=1}^m e^{-\langle A_i,x\rangle}(1-\frac{1}{2} \langle A_i,y\rangle +\frac{1}{4}\langle A_i,y\rangle ^2)\\
&=&\Phi(x)\cdot\sum_{i=1}^m\lambda_i(1-\frac{1}{2} \langle A_i,y\rangle +\frac{1}{4} y^TA_iA_i^Ty)\\
&\le & \Phi(x)\cdot (1-\frac{1}{4}\|y\|_2^2).
\end{eqnarray*}
\end{proof}

Thus as long as $\|y\|_2\ge \Omega(\frac{1}{n}),$ 
gradient descent will decrease the potential function by a factor of $e^{-\Theta(1/n^2)}$ in each iteration, and so in at most $O(n^2\ln(m))$ iterations we arrive at a point
$x$ with $\Phi(x)<1.$
\footnote{Consider a single phase of the algorithm without rescaling. There exists $x^* \in P$ with $\|x^*\|_2 = 1$ so that $B(x^*,\rho) \subseteq P$, where $B(x^*,\rho) := \{ x \in \setR^n \mid \|x^* - x\|_2 \leq \rho\}$.
Then $\|\lambda A\|_2 \cdot \|x^*\|_2 \geq \left<\lambda A, x^*\right> = \sum_{i=1}^m \lambda_i \left<A_i,x^*\right> \geq \rho$,
since $\left<A_i,x^*\right> \geq \rho$ for all $i$. Therefore the algorithm is guaranteed to find a feasible point in $O(\frac{\ln(m)}{\rho^2})$ iterations without rescaling.
This argument is closely related to the classical analysis of the perceptron.}

\subsection{Modified Gradient Descent}
With $\Delta=\Theta(\frac{1}{n})$, the standard gradient descent approach implements the initial phase of Algorithm \ref{alg:fullalg} in $\tilde{O}(n^2)$ iterations. It turns out we can get the same guarantee in $\tilde{O}(n)$ iterations by choosing a more sophisticated update direction. 
While we do not know how to guarantee an update direction that decreases 
$\Phi(x)$ by factor of more than $e^{-\Theta(\|y\|^2)}$, we are able to find a
direction so that the \emph{product} of $\Phi(x)$ and $\|\nabla \Phi(x)\|_2$ decreases
a lot faster. Note that in the following we will work with a general norm so that the results can be applied directly to either Algorithm \ref{alg:fullalg} (with $H=I$) or Algorithm \ref{alg:fullalgH}. We assume now that $\|A_i\|_{H^{-1}}=1$ for all $i$.

\begin{theorem}\label{thm:mwustep} Suppose $H\succ 0$ and $\|\lambda A\|_{H^{-1}}\ge \frac{\beta}{n}$, where $\beta>0$ is an arbitrary constant. Then in time $O(mn^{\omega-1})$, we can find $\varepsilon>0$ and $p \in \setR^n$ so that 
$$\hspace{.5cm}   \|\nabla \Phi(x+\varepsilon p)\|_{H^{-1}}\cdot\Phi(x+\varepsilon p)\le \|\nabla \Phi(x)\|_{H^{-1}}\cdot\Phi(x) \cdot e^{-\tilde{\Theta}(1/n)}. $$ 
\end{theorem}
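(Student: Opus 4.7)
My plan is to find an update direction $p$ by projecting the residual $y := \lambda A$ onto a well-chosen generalized eigenspace of the Hessian, and then tune the step size $\varepsilon$ so that the product $\Phi \cdot \|\nabla \Phi\|_{H^{-1}}$ drops by a factor of at least $\exp(-\tilde{\Omega}(1/n))$ in one step.

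\textbf{Setup.} Writing $g := \nabla \Phi(x) = -\Phi(x)\cdot y$ and $\nabla^2\Phi(x) = \Phi(x)\cdot M$ where $M = \sum_i \lambda_i A_iA_i^T$, one has $\|g\|_{H^{-1}} = \Phi(x)\cdot \|y\|_{H^{-1}}$, and the normalization $\|A_i\|_{H^{-1}}=1$ yields $\textrm{tr}(H^{-1}M) = \sum_i\lambda_i \|A_i\|_{H^{-1}}^2 = 1$. The hypothesis becomes $\|y\|_{H^{-1}} \ge \beta/n$.

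\textbf{Eigenbucketing and pigeonhole.} Compute the generalized eigendecomposition $Mv_i = \mu_iHv_i$ (equivalently diagonalize $H^{-1/2}MH^{-1/2}$), so $\mu_1 \ge \cdots \ge \mu_n \ge 0$ and $\sum \mu_i = 1$. Group the indices into $K = O(\log n)$ dyadic buckets $B_k = \{i : \mu_i \in [2^{-k}, 2^{-k+1})\}$, write $y = \sum_k y_k$ accordingly, so that $\|y\|_{H^{-1}}^2 = \sum_k \|y_k\|_{H^{-1}}^2$. By pigeonhole some bucket $k^*$ satisfies $\|y_{k^*}\|_{H^{-1}}^2 \ge \|y\|_{H^{-1}}^2/K \ge \beta^2/(n^2 \log n)$. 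Set $\mu^* := 2^{-k^*}$ so that all eigenvalues in $B_{k^*}$ lie in $[\mu^*, 2\mu^*)$ and $M$ maps the corresponding eigenspace to itself.

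\textbf{The update and case analysis.} Take $p := y_{k^*}$. Using $e^{-z} \le 1 - z + z^2$ and $|e^{-z}-(1-z)| \le z^2$ valid for $|z|\le 1$, the Taylor expansion is controlled provided $\varepsilon \|p\|_H \le 1$ (since $|\langle A_i, p\rangle| \le \|A_i\|_{H^{-1}}\|p\|_H = \|p\|_H$). I would split based on whether $\mu^* \ge \|y_{k^*}\|_H$:

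\textbf{Case A} ($\mu^* \ge \|y_{k^*}\|_H$): take $\varepsilon = \Theta(1/\mu^*)$. The quadratic term is comparable to the linear one, so $\Phi'/\Phi \le 1 - \Omega(\|y_{k^*}\|_H^2/\mu^*)$, and more importantly $g' = g + \varepsilon\Phi\cdot Mp + (\text{error})$ where on the eigenspace $B_{k^*}$ the operator $I - \varepsilon M$ has spectral radius bounded away from $1$ by a constant, so the projection of $g'$ onto that eigenspace shrinks by a constant factor. This yields $\|g'\|_{H^{-1}}^2/\|g\|_{H^{-1}}^2 \le 1 - \Omega(\|y_{k^*}\|_{H^{-1}}^2/\|y\|_{H^{-1}}^2) \le 1 - \Omega(1/\log n)$, which dominates and gives $\Psi'/\Psi \le e^{-\tilde{\Omega}(1)}$.

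\textbf{Case B} ($\mu^* < \|y_{k^*}\|_H$): take $\varepsilon = \Theta(1/\|y_{k^*}\|_H)$ to saturate the Taylor-validity constraint. The quadratic term in $\Phi$ is then dominated by the linear term (since $\varepsilon^2 p^TMp \le \varepsilon^2 \cdot 2\mu^*\|p\|_H^2 \ll \varepsilon\|p\|_H^2$), giving $\Phi'/\Phi \le 1 - \Omega(\|y_{k^*}\|_H) \le e^{-\tilde{\Omega}(1/n)}$. Meanwhile $\|g'\|_{H^{-1}} \le \|g\|_{H^{-1}}$ since $\varepsilon M$ restricted to $B_{k^*}$ has spectral radius at most $2\varepsilon\mu^* < 1/2$, so $(I-\varepsilon M)$ is a contraction on that eigenspace and the identity elsewhere. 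Thus $\Psi'/\Psi \le e^{-\tilde{\Omega}(1/n)}$ as required.

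\textbf{Runtime.} Forming $M = A^T\mathrm{diag}(\lambda)A$ by rectangular fast matrix multiplication costs $O(mn^{\omega-1})$; the generalized eigendecomposition and projecting $y$ onto each bucket cost $O(n^\omega)$ and $O(n^2)$ respectively, for a total of $O(mn^{\omega-1})$.

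\textbf{Main obstacle.} The delicate part will be controlling the third-order Taylor remainder in $\nabla\Phi$. A naive bound on the error vector via $\|{\rm err}\|_{H^{-1}} \le \Phi\cdot\varepsilon^2 p^TMp$ turns out to be tight enough in Case B but is borderline in Case A, where the first-order decrease of $\|g\|_{H^{-1}}$ must still dominate. Handling this cleanly — perhaps by an exact expression using the reweighted distribution $\tilde\lambda_i \propto \lambda_i e^{-\varepsilon\langle A_i,p\rangle}$ rather than a truncated Taylor series — will be the main technical hurdle in making the two-case analysis above rigorous.
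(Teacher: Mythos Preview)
Your approach is essentially the same as the paper's: project $y$ onto a dyadically-bucketed eigenspace of the (generalized) Hessian, use that projection as the update direction, and argue by pigeonhole over $O(\log n)$ buckets. The paper differs only in the projection mechanism: rather than a full eigendecomposition, it computes $z_k=(I-N)^{2^k}z$ for $k=1,\dots,K=O(\log n)$ by repeated squaring, which acts as a soft low-pass filter onto eigenvalues $\lesssim 2^{-k}$. This yields the same pigeonhole dichotomy (either some $z_k$ has both $\langle z,z_k\rangle$ and $\langle z,Nz_k\rangle/\|Nz_k\|_2$ bounded below by $1/\polylog$, or $z_K$ has $\|Nz_K\|_2$ polynomially small) without an eigensolver.

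Where your write-up is looser than the paper is exactly the place you flag as the ``main obstacle.'' In your Case~A you argue that the projection of $g'=\nabla\Phi(x+\varepsilon p)$ onto the $k^*$-eigenspace shrinks by a constant factor, treating the map $g\mapsto g'$ as if it were $(I-\varepsilon M)$. But $g'$ is built from the \emph{new} weights $\lambda_i'\propto\lambda_i e^{-\varepsilon\langle A_i,p\rangle}$, and the eigenspaces of the new Hessian are not those of $M$; the ``contraction on the eigenspace'' picture does not directly apply. The paper sidesteps this by never tracking eigenspace components of $g'$. Instead it bounds $\|\nabla\Phi(x+\varepsilon p)\|_{H^{-1}}/\Phi(x)\le \|y-\varepsilon Mp\|_{H^{-1}}+\varepsilon^2 p^TMp$ via the triangle inequality applied to $\sum_i\lambda_i A_i\bigl(1-\varepsilon\langle A_i,p\rangle+\eta_i\varepsilon^2\langle A_i,p\rangle^2\bigr)$, so the entire analysis reduces to the four scalars $\langle y,p\rangle$, $p^TMp$, $\langle y,Mp\rangle_{H^{-1}}$, $\|Mp\|_{H^{-1}}$, all computed with respect to the \emph{old} $M$. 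Your exact eigenprojection makes these four scalars easy to control (indeed your $p$ always lands in the paper's ``Case~1'' with polylog slack, except for the tail bucket which is ``Case~2''), so once you adopt this scalar bound the argument goes through cleanly. A smaller point: writing $p:=y_{k^*}$ and then referring to $\|y_{k^*}\|_H$ conflates primal and dual; the correct choice is $p=H^{-1/2}z_{k^*}$ (equivalently $H^{-1}y_{k^*}$), for which $\|p\|_H=\|z_{k^*}\|_2\le 1$.
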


Before going through the proof, we note that the update step of Theorem~\ref{thm:mwustep} yields a MWU phase that runs in time $\tilde{O}(mn^\omega)$. In particular, this gives the running time guarantee of Theorem \ref{thm:maintheoremmwu}.



\begin{lemma}\label{lem:mwuphase}
Suppose $H\succ 0$, and let $\beta$ be an arbitrary constant. Then in time $\tilde{O}(mn^\omega)$ we can run a MWU phase, which either finds $x\in P$ or gives $\lambda\in \setR^m_{\ge 0}$ with $\|\lambda\|_1=1$ and $\|\lambda A\|_{H^{-1}}\le \frac{\beta}{n}$.
\end{lemma}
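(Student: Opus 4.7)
\textbf{Proof plan for Lemma \ref{lem:mwuphase}.} The plan is to iterate the update rule of Theorem \ref{thm:mwustep} starting from $x^{(0)} = 0$, using the product $\Psi(x) := \|\nabla \Phi(x)\|_{H^{-1}} \cdot \Phi(x)$ as the progress measure, and to stop as soon as either $\Phi(x) < 1$ (so $x \in P$) or $\|\lambda A\|_{H^{-1}} \leq \beta/n$, where $\lambda_i = e^{-\langle A_i, x\rangle}/\Phi(x)$ is the natural MWU distribution.

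First I would bound the initial value of $\Psi$. Since $\Phi(0) = m$ and $\nabla \Phi(0) = -\sum_{i=1}^m A_i$, the triangle inequality together with $\|A_i\|_{H^{-1}} = 1$ gives $\|\nabla \Phi(0)\|_{H^{-1}} \leq m$, so $\Psi(0) \leq m^2$. Next I would verify that $\Psi(x)$ controls the two stopping conditions simultaneously. Observe that $\|\lambda A\|_{H^{-1}} = \|\nabla \Phi(x)\|_{H^{-1}}/\Phi(x)$, so if $\Phi(x) \geq 1$ then $\|\lambda A\|_{H^{-1}} \leq \Psi(x)$, while if $\Phi(x) < 1$ we already have a point in $P$. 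Therefore as soon as $\Psi(x) \leq \beta/n$, at least one of the two desired outputs is available.

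The core iteration is then: at the current iterate $x$, form $\lambda$ as above and check whether $\|\lambda A\|_{H^{-1}} \leq \beta/n$; if so, return $\lambda$ (or $x$ if $\Phi(x) < 1$). Otherwise $\|\lambda A\|_{H^{-1}} \geq \beta/n$ and the hypothesis of Theorem \ref{thm:mwustep} is met, so in $O(mn^{\omega-1})$ time we produce a step $\varepsilon p$ with $\Psi(x + \varepsilon p) \leq \Psi(x) \cdot e^{-\tilde\Theta(1/n)}$. Iterating this, after $T$ steps we have $\Psi \leq m^2 \cdot e^{-\tilde\Theta(T/n)}$, so after $T = \tilde O(n \log(m^2 n/\beta)) = \tilde O(n)$ iterations we are guaranteed $\Psi(x) \leq \beta/n$ and the algorithm terminates. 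Multiplying by the per-iteration cost $O(mn^{\omega-1})$ yields the claimed $\tilde O(mn^\omega)$ total running time.

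The only real subtlety — and the step I would take the most care with — is the accounting at termination: the update rule of Theorem \ref{thm:mwustep} drives down the product $\Psi$, not $\|\lambda A\|_{H^{-1}}$ itself, so I must make sure that no "bad" terminal state exists in which $\Psi$ is small but neither $\Phi < 1$ nor $\|\lambda A\|_{H^{-1}} \leq \beta/n$ holds. The observation above rules this out cleanly: in the regime $\Phi(x) \geq 1$ the quantity $\|\lambda A\|_{H^{-1}}$ is bounded by $\Psi(x)$, and in the regime $\Phi(x) < 1$ feasibility is already certified. Everything else is routine bookkeeping.
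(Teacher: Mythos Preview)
Your proposal is correct and essentially identical to the paper's own proof: your potential $\Psi(x)=\|\nabla\Phi(x)\|_{H^{-1}}\cdot\Phi(x)$ is exactly the quantity $\|\lambda A\|_{H^{-1}}\cdot\Phi(x)^2$ the paper tracks, and your termination analysis (``if $\Phi\geq 1$ then $\|\lambda A\|_{H^{-1}}\leq\Psi$, else $x\in P$'') is precisely the paper's one-line implication. You supply a few details the paper leaves implicit (the bound $\Psi(0)\leq m^2$ and the explicit iteration count), but the argument is the same.
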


\begin{proof}
Let $\lambda\ge 0$ be such that $\lambda A=-\frac{\nabla\Phi(x)}{\Phi(x)}$. Then as long as $\|\lambda A\|_{H^{-1}}\ge \frac{\beta}{n}$, Theorem~\ref{thm:mwustep} says that the quantity $\|\lambda A\|_{H^{-1}}\cdot \Phi(x)^2$ decreases by a factor of $e^{-\tilde{\Theta}(1/n)}$.
Then in $\tilde{O}(n)$ iterations we will have 
$\|\lambda A\|_{H^{-1}}\cdot \Phi(x)^2\le \frac{\beta}{n}$, which implies that either $\Phi(x)<1$ or $\|\lambda A\|_{H^{-1}}\le \frac{\beta}{n}$. 
\end{proof}

The remainder of this section will be devoted to the proof of Theorem \ref{thm:mwustep}.
We begin by establishing some useful notation.
For any symmetric positive definite matrix $H\succ 0$ we define the inner product $\left<x,y\right>_H := x^THy$. Without any subscript $\left<x,y\right> = x^Ty$ will continue to denote the canonical inner product.

Given $x\in \setR^n$, define $\lambda_i=\frac{1}{\Phi(x)}e^{-\langle A_i,x\rangle}$, $y=-\frac{\nabla \Phi(x)}{\Phi(x)}=\sum_{i=1}^m\lambda_iA_i$ and $M=\frac{\nabla^2\Phi(x)}{\Phi(x)}=\sum_{i=1}^m\lambda_iA_iA_i^T$. Even though all three depend on $x$, we will not denote that here to keep the notation clean. 

To prove Theorem \ref{thm:mwustep}, we first show how $\Phi(x)$ decreases as we take steps in an arbitrary direction $p$.

\begin{lemma}\label{lem:phidecreaseH}
For any $0<\varepsilon\le 1$ and $p\in \R^n$ with $\|p\|_H\le 1$, we have 
 $$\Phi(x+\varepsilon p)\le \Phi(x)\cdot (1-\varepsilon \langle y,p\rangle +\varepsilon^2 p^TMp).$$
\end{lemma}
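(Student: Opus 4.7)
The plan is to prove this by a direct second-order expansion of each exponential summand, mirroring the analysis already carried out in Lemma~\ref{lem:phidecrease} but in the $H$-geometry. The core observation is that with the normalization $\|A_i\|_{H^{-1}}=1$ and $\|p\|_H \le 1$, each linear form $\langle A_i, p\rangle$ has absolute value at most $1$, which lets us invoke the elementary inequality $e^{-z} \le 1 - z + z^2$ (valid for $|z| \le 1$) termwise.

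Concretely, I would first write
\[
\Phi(x+\varepsilon p) = \sum_{i=1}^m e^{-\langle A_i, x\rangle}\, e^{-\varepsilon \langle A_i, p\rangle}.
\]
Next, I would verify that $|\varepsilon \langle A_i, p\rangle| \le 1$: by Cauchy--Schwarz applied in the inner product $\langle \cdot, \cdot\rangle_H$ (equivalently, writing $\langle A_i, p\rangle = \langle H^{-1/2}A_i, H^{1/2}p\rangle$ and using the standard Cauchy--Schwarz) one has $|\langle A_i, p\rangle| \le \|A_i\|_{H^{-1}} \cdot \|p\|_H \le 1$, and since $\varepsilon \le 1$ the bound follows. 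Then $e^{-\varepsilon \langle A_i, p\rangle} \le 1 - \varepsilon \langle A_i, p\rangle + \varepsilon^2 \langle A_i, p\rangle^2$.

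Substituting this termwise and factoring out $\Phi(x)$ using $\lambda_i = \frac{1}{\Phi(x)} e^{-\langle A_i, x\rangle}$, I would obtain
\[
\Phi(x+\varepsilon p) \le \Phi(x)\sum_{i=1}^m \lambda_i\Bigl(1 - \varepsilon \langle A_i, p\rangle + \varepsilon^2 \langle A_i, p\rangle^2\Bigr).
\]
Finally, since $\sum_i \lambda_i = 1$, $\sum_i \lambda_i \langle A_i, p\rangle = \langle y, p\rangle$, and $\sum_i \lambda_i \langle A_i, p\rangle^2 = \sum_i \lambda_i\, p^T A_iA_i^T p = p^T M p$, the three sums collapse into the claimed expression $1 - \varepsilon \langle y, p\rangle + \varepsilon^2 p^T M p$.

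The only substantive point of care in the argument is the $H$-Cauchy--Schwarz step used to certify $|\langle A_i, p\rangle| \le 1$, since without this bound the elementary inequality $e^{-z} \le 1 - z + z^2$ need not apply. Everything else is a direct rearrangement, so I expect the proof to be short.
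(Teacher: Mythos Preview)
Your proposal is correct and matches the paper's own proof essentially line for line: both split the exponential, bound $|\varepsilon\langle A_i,p\rangle|\le 1$ via the generalized Cauchy--Schwarz inequality (using $\|A_i\|_{H^{-1}}=1$ and $\|p\|_H\le 1$), apply $e^{z}\le 1+z+z^2$ for $|z|\le 1$ termwise, and then collapse the weighted sums into $1-\varepsilon\langle y,p\rangle+\varepsilon^2 p^TMp$ using the definitions of $\lambda$, $y$, and $M$.
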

\begin{proof}
Notice that since $\|p\|_H\le 1$ and $\|A_i\|_{H^{-1}}=1$ we have $|\langle A_i,\varepsilon p\rangle|\le 1$ by the generalized Cauchy-Schwarz inequality.
Writing out the definitions we obtain
\begin{eqnarray*}
\Phi(x+\varepsilon p) &=& \sum_{i=1}^m e^{-\langle A_i,x+\varepsilon p\rangle}\\
&=& \sum_{i=1}^m e^{-\langle A_i,x\rangle }e^{-\varepsilon\langle A_i,p\rangle}\\
&\stackrel{(*)}{\le} & \sum_{i=1}^m e^{-\langle A_i,x\rangle}(1-\varepsilon \langle A_i,p\rangle +\varepsilon^2\langle A_i,p\rangle ^2)\\
&=&\Phi(x)\cdot\sum_{i=1}^m\lambda_i(1-\varepsilon \langle A_i,p\rangle +\varepsilon^2 p^TA_iA_i^Tp)\\
&=& \Phi(x)\cdot (1-\varepsilon \langle y,p\rangle +\varepsilon^2 p^TMp).
\end{eqnarray*}
In $(*)$ we use the estimate that for any $z\in \R$ with $|z|\le 1$ one has $e^z\le 1+z+z^2$.
\end{proof}

In a similar way, we bound $\|\nabla \Phi(x)\|_{H^{-1}}$ after an update step in an arbitrary direction $p$.

\begin{lemma}\label{lem:gradphidecreaseH} Suppose $p\in\R^n$ with $\|p\|_H\le 1$, and $0<\varepsilon\le 1$
we have
\begin{eqnarray*}
\|\nabla \Phi(x+ \varepsilon p)\|_{H^{-1}}\le \|\nabla \Phi(x)\|_{H^{-1}}\cdot 
\left( 1 + \frac{1}{\|y\|_{H^{-1}}} \varepsilon^2\langle p,Mp\rangle +\frac{1}{\|y\|_{H^{-1}}^2}\left(-\varepsilon\langle y,Mp\rangle_{H^{-1}}+\varepsilon^2 \|Mp\|_{H^{-1}}^2\right)\right)
\end{eqnarray*}
\end{lemma}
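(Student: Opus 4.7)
The plan is to mirror the proof of Lemma~\ref{lem:phidecreaseH}, but applied to the vector $\nabla\Phi(x+\varepsilon p)$ rather than the scalar $\Phi(x+\varepsilon p)$. The extra work is that after taking the Taylor expansion, we must handle a vector-valued quantity, which forces us to pass through the squared $H^{-1}$-norm.

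First, I would expand
$$\nabla\Phi(x+\varepsilon p) \;=\; -\sum_{i=1}^m e^{-\langle A_i,x\rangle}\,e^{-\varepsilon\langle A_i,p\rangle}\,A_i$$
and split $e^{-\varepsilon\langle A_i,p\rangle} = 1 - \varepsilon\langle A_i,p\rangle + r_i$, where $r_i := e^{-\varepsilon\langle A_i,p\rangle} - 1 + \varepsilon\langle A_i,p\rangle$. The generalized Cauchy--Schwarz bound $|\varepsilon\langle A_i,p\rangle| \le \varepsilon\,\|p\|_H\|A_i\|_{H^{-1}} \le 1$ (as in the previous lemma) lets us invoke the elementary estimate $0 \le e^{-z}-1+z \le z^2$ for $|z|\le 1$, so $0 \le r_i \le \varepsilon^2\langle A_i,p\rangle^2$. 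Collecting the linear part via $-\sum_i e^{-\langle A_i,x\rangle}A_i = \Phi(x)\,y$ and $\sum_i e^{-\langle A_i,x\rangle}\langle A_i,p\rangle A_i = \Phi(x)\,Mp$ yields
$$\nabla\Phi(x+\varepsilon p) \;=\; \Phi(x)\bigl(-y + \varepsilon Mp\bigr) \;-\; R, \qquad R := \sum_{i=1}^m e^{-\langle A_i,x\rangle}\, r_i\, A_i.$$

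Next, I would bound the error $R$. Since all coefficients $e^{-\langle A_i,x\rangle} r_i$ are nonnegative and $\|A_i\|_{H^{-1}} = 1$, the triangle inequality gives
$$\|R\|_{H^{-1}} \;\le\; \sum_{i=1}^m e^{-\langle A_i,x\rangle}\, r_i \;\le\; \varepsilon^2\, \Phi(x) \sum_{i=1}^m \lambda_i \langle A_i,p\rangle^2 \;=\; \varepsilon^2\,\Phi(x)\,\langle p, Mp\rangle.$$
For the main term, I would expand the squared norm
$$\|-y + \varepsilon Mp\|_{H^{-1}}^2 \;=\; \|y\|_{H^{-1}}^2 - 2\varepsilon\langle y,Mp\rangle_{H^{-1}} + \varepsilon^2 \|Mp\|_{H^{-1}}^2,$$
and then linearize the square root via $\sqrt{a^2+b} \le a + b/(2a)$ (valid for $a>0$, $a^2+b \ge 0$) with $a = \|y\|_{H^{-1}}$.

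Combining the two estimates through the triangle inequality $\|\nabla\Phi(x+\varepsilon p)\|_{H^{-1}} \le \Phi(x)\,\|-y+\varepsilon Mp\|_{H^{-1}} + \|R\|_{H^{-1}}$, and factoring out $\|\nabla\Phi(x)\|_{H^{-1}} = \Phi(x)\,\|y\|_{H^{-1}}$, produces the claimed inequality (with the $\|Mp\|_{H^{-1}}^2$ coefficient being $\tfrac12$ rather than $1$, which is absorbed into the stated upper bound). The main obstacle, such as it is, is simply keeping the $H$-inner-product bookkeeping straight — in particular using $\|A_i\|_{H^{-1}}=1$ when invoking the triangle inequality for $R$, and verifying the Taylor range $|\varepsilon\langle A_i,p\rangle|\le 1$ so the estimate on $r_i$ applies.
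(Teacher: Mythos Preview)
Your proposal is correct and follows essentially the same approach as the paper's proof: expand $e^{-\varepsilon\langle A_i,p\rangle}$ to second order, split off the quadratic remainder via the triangle inequality (using $\|A_i\|_{H^{-1}}=1$), recognize the main term as $\|y-\varepsilon Mp\|_{H^{-1}}$, and linearize the square root. The only cosmetic differences are that you note $r_i\ge 0$ (the paper just uses $|\eta_i|\le 1$) and you retain the sharper factor $\tfrac12$ on $\varepsilon^2\|Mp\|_{H^{-1}}^2$, which the paper absorbs into the looser stated bound.
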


\begin{proof}
For any $z$ with $|z|\le 1$, we have $e^z=1+z+\eta z^2$ for some $\eta \in \R$ with $|\eta|\le 1$.
In particular, since $\|A_i\|_{H^{-1}}=1$ and $\|p\|_H\le 1$, we have $|\langle A_i,\varepsilon p\rangle|\le 1$ and so we have such an $\eta_i$ for each $i$.
\begin{eqnarray*}
\frac{\|\nabla\Phi(x+\varepsilon p)\|_{H^{-1}}}{\Phi(x)}
&=& \frac{1}{\Phi(x)}\Big\|\sum_{i=1}^m (-A_i)\cdot e^{-\langle A_i,x+\varepsilon p\rangle}\Big\|_{H^{-1}}\\
&=&  \frac{1}{\Phi(x)}\Big\|\sum_{i=1}^m(-A_i)e^{-\langle A_i,x\rangle}e^{-\varepsilon \langle A_i,p\rangle}\Big\|_{H^{-1}}\\
&=& \Big\| \sum_{i=1}^m (-A_i) \cdot \lambda_i \cdot \Big(1-\varepsilon \left<A_i,p\right> + \varepsilon^2 \cdot \eta_i \left<A_i,p\right>^2\Big) \Big\|_{H^{-1}} \\
&\le &\Big\| \sum_{i=1}^m (-A_i) \cdot \lambda_i \cdot (1-\varepsilon \left<A_i,p\right>) \Big\|_{H^{-1}}
+ \varepsilon^2 \cdot \Big\| \sum_{i=1}^m (-A_i) \cdot \lambda_i \eta_i \left<A_i,p\right>^2 \Big\|_{H^{-1}}  \\ 
&{\leq}& \Big\| \sum_{i=1}^m \lambda_iA_i- \varepsilon \sum_{i=1}^m \lambda_iA_i\left<A_i,p\right>
\Big\|_{H^{-1}} + \varepsilon^2 \cdot \sum_{i=1}^m \underbrace{\|A_i\|_{H^{-1}}}_{=1} \cdot \lambda_i \cdot \underbrace{|\eta_i|}_{\leq 1} \cdot \left<A_i,p\right>^2 \\
&\leq& \Big\| y - \varepsilon \cdot Mp \Big\|_{H^{-1}} + \varepsilon^2 \cdot \sum_{i=1}^m \lambda_i\left<A_i,p\right>^2 \\
 &=& \left( \|y\|_{H^{-1}}^2-2\varepsilon y^TH^{-1}Mp +\varepsilon^2 \|Mp\|_{H^{-1}}^2\right)^{1/2}+\varepsilon^2p^TMp\\
&\le & \|y\|_{H^{-1}} \cdot \left(1+2 \frac{1}{\|y\|_{H^{-1}}^2} \left(-\varepsilon\langle y,Mp\rangle_{H^{-1}} 
+\varepsilon^2 \|Mp\|_{H^{-1}}^2\right)\right)^{1/2}+\varepsilon^2p^TMp\\
& \le & \|y\|_{H^{-1}}\left( 1 +\frac{1}{\|y\|_{H^{-1}}^2}\left(-\varepsilon\langle y,Mp\rangle_{H^{-1}}+\varepsilon^2 \|Mp\|_{H^{-1}}^2\right)\right) +\varepsilon^2p^TMp\\
& \le & \|y\|_{H^{-1}}\left( 1 + \frac{1}{\|y\|_{H^{-1}}}\varepsilon^2p^TMp+\frac{1}{\|y\|_{H^{-1}}^2}\left(-\varepsilon\langle y,Mp\rangle_{H^{-1}}+\varepsilon^2 \|Mp\|_{H^{-1}}^2\right)\right)
\end{eqnarray*}
Recalling that $\nabla\Phi(x)=-\Phi(x)y$ finishes the proof. 
\end{proof}

Using Lemmas \ref{lem:phidecreaseH} and \ref{lem:gradphidecreaseH}, we can show a sufficient condition for $p$ to satisfy Theorem \ref{thm:mwustep}.

\begin{lemma}\label{lem:phigradphidecreaseH}
 Suppose $p\in \R^n$ with $\|p\|_H\le 1$ and constant $a>0$ is such that either
\begin{enumerate}
\item $\langle y,p\rangle \ge \frac{\|y\|_{H^{-1}}}{(\log{n})^a}$ and $\langle y,Mp\rangle_{H^{-1}} \ge \frac{\|Mp\|_{H^{-1}}\cdot \|y\|_{H^{-1}}}{(\log{n})^a}$ or 
\item $\langle y,p \rangle \ge \frac{\|y\|_{H^{-1}}}{(\log{n})^a}$ and $\|Mp\|_{H^{-1}}\le O\left( \frac{1}{\poly(n)}\right)$.
\end{enumerate}
Then as long as $\|y\|_{H^{-1}}\ge \frac{\beta}{n}$, choosing $\varepsilon =\min \left\{\frac{\|y\|_{H^{-1}}}{4(\log{n})^{2a}\|Mp\|_{H^{-1}}},\frac{1}{2(\log{n})^a}\right\}$ gives 
$$ \|\nabla \Phi(x+ \varepsilon p)\|_{H^{-1}}\cdot\Phi(x+ \varepsilon p)\le \|\nabla \Phi(x)\|_{H^{-1}}\cdot\Phi(x) e^{-\tilde{\Theta}(1/n)}. $$ 
\end{lemma}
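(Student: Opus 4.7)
The plan is to multiply the two bounds already established by Lemmas~\ref{lem:phidecreaseH} and~\ref{lem:gradphidecreaseH}. Writing
$$\Phi(x+\varepsilon p)\cdot\|\nabla\Phi(x+\varepsilon p)\|_{H^{-1}} \;\le\; \Phi(x)\cdot\|\nabla\Phi(x)\|_{H^{-1}}\cdot A\cdot B,$$
with $A=1-\varepsilon\langle y,p\rangle+\varepsilon^2 p^T M p$ and $B=1+\tfrac{\varepsilon^2 p^T M p}{\|y\|_{H^{-1}}}-\tfrac{\varepsilon\langle y,Mp\rangle_{H^{-1}}}{\|y\|_{H^{-1}}^2}+\tfrac{\varepsilon^2\|Mp\|_{H^{-1}}^2}{\|y\|_{H^{-1}}^2}$, I would expand the product and keep track of the two \emph{linear-in-$\varepsilon$} decrease contributions, namely $-\varepsilon\langle y,p\rangle$ and $-\varepsilon\langle y,Mp\rangle_{H^{-1}}/\|y\|_{H^{-1}}^2$, as well as the three explicit quadratic-in-$\varepsilon$ positive terms and the quadratic cross-term $a_1 a_2$ arising from $(1+a_1)(1+a_2)$. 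The overall goal is to show $A\cdot B\le 1-\tilde\Omega(1/n)$, so that $1-t\le e^{-t}$ yields the stated conclusion.

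The heart of the argument is a case split on which branch of the min attains $\varepsilon$. If $\varepsilon=\|y\|_{H^{-1}}/(4(\log n)^{2a}\|Mp\|_{H^{-1}})$, I would use hypothesis~(1): plugging into the linear gradient-decrease term gives $\varepsilon\langle y,Mp\rangle_{H^{-1}}/\|y\|_{H^{-1}}^2\ge \varepsilon\|Mp\|_{H^{-1}}/(\|y\|_{H^{-1}}(\log n)^a)=1/(4(\log n)^{3a})$, which is $\tilde\Omega(1)$ on its own. The matching quadratic $\varepsilon^2\|Mp\|_{H^{-1}}^2/\|y\|_{H^{-1}}^2$ is precisely half of this linear term by the choice of $\varepsilon$, so it is absorbed. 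If instead $\varepsilon=1/(2(\log n)^a)$, then we are in case~(2), where $\|Mp\|_{H^{-1}}\le O(1/\poly(n))$ can be chosen so that every quadratic term (even divided by $\|y\|_{H^{-1}}^2\ge \beta^2/n^2$) is $o(1/n)$; here the dominant contribution is the linear $\Phi$-decrease, and $\varepsilon\langle y,p\rangle\ge \|y\|_{H^{-1}}/(2(\log n)^{2a})\ge \beta/(2n(\log n)^{2a})=\tilde\Omega(1/n)$.

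To control the $\varepsilon^2 p^T M p$ terms in both $A$ and $B$, I would invoke the generalized Cauchy--Schwarz bound $p^T M p=\langle p,Mp\rangle\le \|p\|_H\cdot\|Mp\|_{H^{-1}}\le \|Mp\|_{H^{-1}}$, which reduces them to the same scale as the other quadratic terms already handled. The cross-term $a_1 a_2$ from $(1+a_1)(1+a_2)$ is $O(\varepsilon^2)$ in the same variables and absorbed by the same slack factor of $2$ I leave in each linear decrease.

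The main obstacle, in my view, is the bookkeeping of the case split: one must verify that \emph{each} of the three quadratic positive contributions gets dominated by a specific linear negative term that is actually available under the active hypothesis, and that the correct lower bound on $\|y\|_{H^{-1}}$ (either $\beta/n$ or the cancellation enabled by case~(1)) is applied in the right spot. Once this is set up, combining everything and using $1-t\le e^{-t}$ produces the claimed factor $e^{-\tilde\Theta(1/n)}$.
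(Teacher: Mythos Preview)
Your high-level strategy (multiply the two factor bounds from Lemmas~\ref{lem:phidecreaseH} and~\ref{lem:gradphidecreaseH}, then do a case split on which branch of the $\min$ realizes~$\varepsilon$) is the right one, and your use of $p^TMp\le\|p\|_H\|Mp\|_{H^{-1}}\le\|Mp\|_{H^{-1}}$ is exactly what is needed to merge the quadratic terms. The problem is that your case split conflates two different dichotomies: which branch of the $\min$ is active, versus which hypothesis (1) or (2) we are in. These are \emph{not} in one-to-one correspondence. The sentence ``If instead $\varepsilon=1/(2(\log n)^a)$, then we are in case~(2)'' is simply false: under hypothesis~(1) it is perfectly possible that $\|Mp\|_{H^{-1}}\le \|y\|_{H^{-1}}/(2(\log n)^a)$, which forces the second branch of the $\min$, yet $\|Mp\|_{H^{-1}}$ need not be $O(1/\poly(n))$.

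In that missed sub-case your argument actually breaks. With $\varepsilon=1/(2(\log n)^a)$ and only the bound $\|Mp\|_{H^{-1}}\le \|y\|_{H^{-1}}/(2(\log n)^a)$, the quadratic term $\varepsilon^2\|Mp\|_{H^{-1}}^2/\|y\|_{H^{-1}}^2$ in the gradient factor $B$ can be as large as $1/(16(\log n)^{4a})=\tilde\Theta(1)$, which swamps the $\tilde\Theta(1/n)$ decrease you obtain from $\varepsilon\langle y,p\rangle$. The fix is precisely what the paper does: in this sub-case you \emph{must} invoke the second inequality of hypothesis~(1), $\langle y,Mp\rangle_{H^{-1}}\ge \|Mp\|_{H^{-1}}\|y\|_{H^{-1}}/(\log n)^a$, to see that the linear gradient-decrease term $-\varepsilon\langle y,Mp\rangle_{H^{-1}}/\|y\|_{H^{-1}}^2$ dominates both $\varepsilon^2\|Mp\|_{H^{-1}}^2/\|y\|_{H^{-1}}^2$ and $\varepsilon^2 p^TMp/\|y\|_{H^{-1}}$, so that $B\le 1$; the net decrease then comes entirely from $A\le 1-\tilde\Theta(\|y\|_{H^{-1}})$. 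In short, you need three sub-cases, not two: (hypothesis~1, first branch), (hypothesis~1, second branch), and hypothesis~2.
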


\begin{proof}
Let $\varepsilon =\min \left\{\frac{\|y\|_{H^{-1}}}{4(\log{n})^{2a}\|Mp\|_{H^{-1}}},\frac{1}{2(\log{n})^a}\right\}$. 
Then by Lemma \ref{lem:phidecreaseH}, we have
$$\Phi(x+\varepsilon p)\le \Phi(x)\cdot (1-\varepsilon \langle y,p\rangle +\varepsilon^2\|Mp\|_{H^{-1}})\le \Phi(x) \cdot (1-\varepsilon \frac{\|y\|_{H^{-1}}}{(\log{n})^a}+\varepsilon^2\|Mp\|_{H^{-1}}).$$

Assume first that we are in Case $1$.
By Lemma \ref{lem:gradphidecreaseH}, since we know $\varepsilon\le \frac{1}{2(\log{n})^a}$, we have
$$\|\nabla\Phi(x+\varepsilon p)\|_{H^{-1}}\le \|\nabla\Phi(x)\|_{H^{-1}}\cdot \Big(1-\frac{\varepsilon}{2} 
\frac{\|Mp\|_{H^{-1}}}{\|y\|_{H^{-1}}(\log{n})^{a}} +\varepsilon^2 \frac{\|Mp\|_{H^{-1}}^2}{\|y\|_{H^{-1}}^2}\Big).$$

If $\varepsilon=\frac{1}{2(\log{n})^a}$, then $\|Mp\|_{H^{-1}} \le \frac{\|y\|_{H^{-1}}}{2(\log{n})^a}$.
Using this, $\Phi(x)$ will decrease by $e^{-\tilde{\Theta}(\|y\|_{H^{-1}})}$, and 
$\|\nabla\Phi(x)\|_{H^{-1}}$ will decrease.

On the other hand, if $\varepsilon=\frac{\|y\|_{H^{-1}}}{4(\log{n})^{2a}\|Mp\|_{H^{-1}}}$, then $\Phi(x)$ will decrease, and $\|\nabla\Phi(x)\|_{H^{-1}}$ decreases by $e^{-\tilde\Theta(1)}$. 
Together, these show that the product decreases by a factor of $e^{-{\tilde{\Theta}}(\|y\|_{H^{-1}})}$.

If we are in Case $2$, the only thing that might change is that when $\varepsilon=\frac{1}{2(\log{n})^a}$ we might have $\|\nabla\Phi(x)\|_{H^{-1}}$ increase by up to $e^{O(1/\poly(n))}$. Since $\|y\|_{H^{-1}}\ge \frac{\beta}{n}$ this is the dominating term, and so we will still get the appropriate decrease. 
\end{proof}


Notice that the conditions of Lemma \ref{lem:phigradphidecreaseH} essentially say that both $p$ and $Mp$ are close in angle with the vector $y$.
In particular, if the gradient happened to be an eigenvector of the Hessian (and hence $y$ an eigenvector of $M$) then Lemma \ref{lem:phigradphidecreaseH} would be satisfied with $p=\frac{y}{\|y\|_{H^{-1}}}$. With this in mind, the idea for computing such a direction $p$ is to project $y$ onto an appropriate eigenspace of $M$.
%
%
To this end, we first prove the following general statement about computing approximate eigenvectors of matrices.

\begin{lemma}\label{lem:computingplemma}
Suppose $z\in \mathbb{R}^n$ is a unit vector, $N$ a PSD matrix with no eigenvalue bigger than $1$ and $K>0$ a given parameter.
For $k=1,...,K$, define $z_k=(I-N)^{2^k}z$. Then $\|z_k\|_2\le 1$, and for an appropriate constant $C>0$ at least one of the following must hold:
\begin{enumerate}
\item There exists $k\le K$ with
$$\langle z,z_k\rangle\ge \frac{C}{K} \textrm{ and } 
\langle z,Nz_k\rangle \ge \frac{C\|Nz_k\|_2}{K^2}$$
\item For $k=K$, we have $$\langle z,z_K\rangle\ge \frac{C}{K} \textrm{ and }
\|Nz_K\|_2\le \frac{K}{2^K}$$
\end{enumerate}
\end{lemma}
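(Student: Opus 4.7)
The plan is to work in the eigenbasis of $N$. Diagonalize $N = \sum_i \mu_i v_i v_i^T$ with orthonormal $v_i$ and $\mu_i \in [0,1]$, and expand $z = \sum_i c_i v_i$ with $\sum_i c_i^2 = 1$. Then $z_k = \sum_i c_i (1-\mu_i)^{2^k} v_i$, so $\|z_k\|_2 \le 1$ is immediate from $|(1-\mu_i)^{2^k}| \le 1$. All three quantities controlling the two cases become power sums in the $\mu_i$ weighted by $c_i^2$:
\[
\langle z, z_k\rangle = \sum_i c_i^2 (1-\mu_i)^{2^k}, \quad \langle z, Nz_k\rangle = \sum_i c_i^2 \mu_i (1-\mu_i)^{2^k}, \quad \|Nz_k\|_2^2 = \sum_i c_i^2 \mu_i^2 (1-\mu_i)^{2^{k+1}}.
\]

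Next I would observe the universal upper bound $\|Nz_k\|_2 \le O(2^{-k})$. This follows from the one-variable inequality $\max_{\mu \in [0,1]} \mu^2 (1-\mu)^{2^{k+1}} = O(4^{-k})$ (attained near $\mu = 2^{-k}$), applied termwise against $\sum_i c_i^2 = 1$. This single bound disposes of the smallness requirement $\|Nz_K\|_2 \le K/2^K$ in Case 2 and of the denominator in condition (ii-a) of Case 1. The only remaining task is therefore to exhibit an iteration $k$ at which $\langle z, z_k\rangle$ and $\langle z, Nz_k\rangle$ are simultaneously large enough.

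The main idea is to bucket indices by the scale of their eigenvalue. For $j = 1, \ldots, K$ set $B_j = \{i : \mu_i \in (2^{-j-1}, 2^{-j}]\}$, with a low tail $T = \{i : \mu_i \le 2^{-K-1}\}$ and a high bucket $H = \{i : \mu_i > 1/2\}$; let $w_{\bullet} = \sum_{i \in \bullet} c_i^2$. These $K+2$ sets partition $[n]$, so by pigeonhole one of them carries mass $\Omega(1/K)$. If $T$ is heavy, then at $k = K$ one has $(1-\mu_i)^{2^K} \ge e^{-1/2}$ on $T$ (since $\mu_i 2^K \le 1/2$), giving $\langle z, z_K\rangle = \Omega(1/K)$, and together with the universal $\|Nz_K\|_2 \le O(2^{-K})$ this is Case 2. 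Otherwise some $B_{j^*}$ with $j^* \in \{1,\ldots,K\}$ is heavy; I would evaluate at $k = j^*$. On $B_{j^*}$ one has $\mu_i 2^{j^*} \in (1/2, 1]$, so both $(1-\mu_i)^{2^{j^*}}$ and $\mu_i (1-\mu_i)^{2^{j^*}} \cdot 2^{j^*}$ are $\Theta(1)$. This yields $\langle z, z_{j^*}\rangle = \Omega(1/K)$ and $\langle z, Nz_{j^*}\rangle = \Omega\bigl(1/(K \cdot 2^{j^*})\bigr)$; dividing the latter by $\|Nz_{j^*}\|_2 \le O(2^{-j^*})$ gives ratio $\Omega(1/K) \ge C/K^2$, which is condition (ii-a).

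The most delicate part will be the ``high'' bucket $H$: a single step of $(I-N)$ essentially annihilates these contributions, so they never help $\langle z, z_k\rangle$ for $k \ge 1$, and the fully degenerate case $\mu_i = 1$ (with $z$ in the $1$-eigenspace) zeros out all three quantities. When $H$ is the pigeonhole winner, I would argue directly at $k = 1$, restricting to the sub-bucket $\mu \in (1/2, 1-\varepsilon]$ where $(1-\mu)^2$ and $\mu(1-\mu)^2$ are both $\Theta(1)$; the strip near $\mu = 1$ has to be handled as a boundary technicality. Once these corner cases are pinned down, everything else reduces to routine tracking of absolute constants through the pigeonhole count and through the one-variable maxima of $\mu^a (1-\mu)^{2^k}$.
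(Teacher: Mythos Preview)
Your approach is essentially the paper's: diagonalize, express everything as power sums in the eigenvalues weighted by $c_i^2$, prove a uniform bound $\|Nz_k\|_2 = O(2^{-k})$, and then pigeonhole over dyadic eigenvalue buckets to locate a good $k$. The paper does exactly this (its buckets are $\alpha_j \in [2^{-k-1},2^{-k}]$ and its universal bound is $\|Nz_k\|_2 \le k/2^k$, slightly cruder than yours but equivalent up to constants).

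The one genuine gap is your handling of the ``high'' bucket $H=\{\mu_i>1/2\}$. You call the strip near $\mu=1$ a ``boundary technicality'', but it is not: the hypothesis allows eigenvalue exactly $1$, and if $z$ lies entirely in the $1$-eigenspace then $z_k=(I-N)^{2^k}z=0$ for every $k\ge 1$, so $\langle z,z_k\rangle=0$ and both alternatives of the lemma fail. Your proposed fix---work at $k=1$ on the sub-bucket $(1/2,1-\varepsilon]$---does nothing when all the mass sits at $\mu=1$. So there is no way to ``pin down'' this corner case while keeping $z_k=(I-N)^{2^k}z$.

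The paper's resolution is a one-line reduction at the start: replace $N$ by $\tfrac12 N$ (and correspondingly take $z_k=(I-\tfrac12 N)^{2^k}z$), which forces all eigenvalues into $[0,\tfrac12]$ and eliminates $H$ entirely, at the cost of a factor of $2$ in the constants. With that reduction in place your bucketing argument goes through verbatim and matches the paper's proof.
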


\begin{proof}
First note that we may assume in fact that no eigenvalue of $N$ is bigger than $\frac{1}{2}$ since we can always replace $N$ with $\frac{1}{2}N$ and only lose a factor of $2$.
Suppose now the eigenvectors of $N$ are unit vectors $v_1,...,v_n$ with eigenvalues $\alpha_1,...,\alpha_n$.

We see that $$z_k=\sum_{j=1}^n (1-\alpha_j)^{ 2^k}\langle z,v_j\rangle v_j.$$
$$Nz_k=\sum_{j=1}^n\alpha_j(1-\alpha_j)^{2^k}\langle z,v_j\rangle v_j.$$
$$\langle z_k,z\rangle =\sum_{j=1}^n(1-\alpha_j)^{2^k}\langle z,v_j\rangle ^2$$
\noindent For any $k$, we can get the following bounds.

\begin{itemize}
\item For any $\alpha_j$, we either have $\alpha_j\le \frac{k}{2^k}$ or $e^{-\alpha_j2^k}\le e^{-k}\le 2^{-k}$. In either case we have the bound $\alpha_j(1-\alpha_j)^{2^k}\le\alpha_je^{-\alpha_j2^k}\le \frac{k}{2^k}$.
Therefore we can conclude that $\|Nz_k\|\le\frac{k}{2^k}$.
\item Whenever $\alpha_j\le \frac{1}{2^k}$
 we have $(1-\alpha_j)^{2^k}\ge e^{-2\alpha_j2^k}\ge \frac{1}{e^2}$ and all other coefficients will be nonnegative, and therefore
 $$\langle z,z_k \rangle\ge \frac{1}{e^2}\sum_{\alpha_j\le 2^{-k}} \langle z,v_j\rangle ^2$$
\end{itemize}
Now, notice that since $\sum_{j=1}^n\langle z,v_j\rangle^2=\|z\|^2$, either there exists $k\le K$ so that 
$$\sum_{\alpha_j\in\left[\frac{1}{2^{k+1}},\frac{1}{2^k}\right]}\langle z,v_j\rangle^2\ge \frac{1}{2K},$$
or else we must have $$\sum_{\alpha_j\le 2^{-K}}\langle z,v_j\rangle^2\ge \frac{1}{2} \implies \langle z,z_K\rangle \ge \frac{1}{2e^2}$$ 
In the latter case we are done, since we already showed $\|Nz_k\|\le\frac{K}{2^K}$.

Otherwise, choose this $k$, and notice that whenever $\alpha_j\in[\frac{1}{2^{k+1}},\frac{1}{2^k}]$, we have  $\alpha_j(1-\alpha_j)^{2^k}\ge \alpha_je^{-2\alpha_j2^k}\ge \frac{1}{e^22^{k+1}}$ , and all other coefficients will be nonnegative.
Therefore $\langle z,Nz_k\rangle = \sum_{j=1}^n \alpha_j(1-\alpha_j)^{2^k}\langle z,v_j\rangle^2 \ge \frac{1}{e^22^{k+1}K}$ and so in particular 
$\langle z,Nz_k\rangle \ge \frac{\|Nz_k\|_2}{2e^2K^2},$
as desired. 
\end{proof} 


Finally, using Lemma \ref{lem:computingplemma}, we show that we can efficiently compute a vector $p$ satisfying Lemma \ref{lem:phigradphidecreaseH}, and hence complete the proof of Theorem \ref{thm:mwustep}.

\begin{lemma}\label{lem:computingpH} 
In time $\tilde{O}(mn^{\omega-1})$ we can find $p\in \R^n$ satisfying the hypotheses of Lemma \ref{lem:phigradphidecreaseH}.
\end{lemma}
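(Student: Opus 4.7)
The plan is to reduce Lemma~\ref{lem:phigradphidecreaseH} to a purely Euclidean statement that Lemma~\ref{lem:computingplemma} handles directly, via the change of variables $\tilde{y} := H^{-1/2} y$, $\tilde{M} := H^{-1/2} M H^{-1/2}$, and $\tilde{p} := H^{1/2} p$ for any candidate $p$. Under these substitutions one checks that $\|\tilde{y}\|_2 = \|y\|_{H^{-1}}$, $\|\tilde{p}\|_2 = \|p\|_H$, $\langle y,p\rangle = \tilde{y}^T \tilde{p}$, $\langle y, Mp\rangle_{H^{-1}} = \tilde{y}^T \tilde{M} \tilde{p}$, and $\|Mp\|_{H^{-1}} = \|\tilde{M} \tilde{p}\|_2$, so the two target inequalities become standard-inner-product statements about $\tilde{p}$ and $\tilde{M}$. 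Crucially, $\tilde{M}$ is symmetric PSD with operator norm at most $1$: for any $u \in \setR^n$, letting $w := H^{-1/2}u$, we have $u^T \tilde{M} u = \sum_i \lambda_i \langle A_i, w\rangle^2 \le \sum_i \lambda_i \|A_i\|_{H^{-1}}^2 \|w\|_H^2 = \|u\|_2^2$ by generalized Cauchy--Schwarz and the normalization $\|A_i\|_{H^{-1}} = 1$.

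I would then invoke Lemma~\ref{lem:computingplemma} with $z := \tilde{y}/\|\tilde{y}\|_2$, $N := \tilde{M}$, and $K := C'\log n$ for a sufficiently large absolute constant $C'$. Setting $\tilde{p} := z_k$ for the index $k$ the lemma returns and $p := H^{-1/2}\tilde{p}$, the bound $\|z_k\|_2 \le 1$ gives $\|p\|_H \le 1$. With $a := 2$, Case~1 of Lemma~\ref{lem:computingplemma} delivers Case~1 of Lemma~\ref{lem:phigradphidecreaseH} because both $C/K \ge 1/(\log n)^2$ and $C/K^2 \ge 1/(\log n)^2$ hold once $C'$ is chosen large enough, and Case~2 delivers Case~2 because $K/2^K = O((\log n)/n^{C'}) = O(1/\poly(n))$.

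For the running time, the dominant steps are: (i) form $M = A^T \Lambda A$ in time $O(mn^{\omega-1})$ by slicing $A$ into $\lceil m/n\rceil$ blocks of $n$ rows and invoking $n\times n$ fast matrix multiplication on each block (assuming $m \ge n$; otherwise $O(n^\omega)$ is already within the target); (ii) compute $H^{-1/2}$ and thus $\tilde{M} = H^{-1/2} M H^{-1/2}$ in $O(n^\omega)$; (iii) build $T_k := (I - \tilde{M})^{2^k}$ for $k=1,\ldots,K$ by repeated squaring $T_{k+1} = T_k^2$, for a total of $\tilde{O}(n^\omega)$; and (iv) form the candidate vectors $z_k = T_k z$ and evaluate the inner products $\langle z, z_k\rangle$, $\tilde{M} z_k$ and $\|\tilde{M} z_k\|_2$, each in $O(n^2)$, returning the first $k$ at which one of the two outcomes holds. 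Summing yields $\tilde{O}(mn^{\omega-1})$, matching the claim.

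The only truly conceptual step is the change of variables that exposes $\tilde{M}$ as a contraction in the standard inner product; once that is in place, the argument is a straightforward instantiation of Lemma~\ref{lem:computingplemma} together with a careful accounting for the cost of forming $M$ via rectangular fast matrix multiplication and of performing the repeated matrix squarings.
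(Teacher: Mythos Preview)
Your proposal is essentially the same as the paper's proof: both apply Lemma~\ref{lem:computingplemma} with $z = H^{-1/2}y/\|y\|_{H^{-1}}$, $N = H^{-1/2}MH^{-1/2}$, $K = \Theta(\log n)$, set $p = H^{-1/2}z_k$, and translate back via the identities $\|p\|_H = \|z_k\|_2$, $\langle y,p\rangle = \|y\|_{H^{-1}}\langle z,z_k\rangle$, $\langle y,Mp\rangle_{H^{-1}} = \|y\|_{H^{-1}}\langle z,Nz_k\rangle$, $\|Mp\|_{H^{-1}} = \|Nz_k\|_2$. Your write-up is in fact more careful than the paper's in that you explicitly verify $\|N\|_{\mathrm{op}} \le 1$ and spell out how the $O(mn^{\omega-1})$ cost of forming $M$ arises via block multiplication.

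One small slip in the parameter juggling: with $a = 2$ the requirement $C/K^2 \ge 1/(\log n)^2$ becomes $C \ge C'^2$, which \emph{fails} for $C'$ large, so ``once $C'$ is chosen large enough'' is the wrong direction. The clean fix is to take $a = 3$ (any $a > 2$ works); then both $C/K \ge 1/(\log n)^a$ and $C/K^2 \ge 1/(\log n)^a$ hold for all sufficiently large $n$ regardless of the constant $C'$, and you are still free to take $C'$ large so that $K/2^K = O(1/\poly(n))$ in Case~2.
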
 

\begin{proof}
Set $K=10\log{n}$, $z=\frac{H^{-1/2}y}{\|y\|_{H^{-1}}}$ and $N=H^{-1/2}MH^{-1/2}$ in Lemma \ref{lem:computingplemma} to get output of $z_k$. 
Let $p=H^{-1/2}z_k$ and notice that 
\begin{enumerate}
\item $\|p\|_H=\|z_k\|_2$,
\item $\|Mp\|_{H^{-1}}=\|Nz_k\|_2$
\item $\langle \frac{y}{\|y\|_{H^{-1}}},p\rangle=\langle z,z_k\rangle$
\item $\langle \frac{y}{\|y\|_{H^{-1}}},Mp\rangle_{H^{-1}}=\langle z,Nz_k\rangle$
\end{enumerate}
In particular, rearranging the statement of Lemma \ref{lem:computingplemma},
this $p$ satisfies the hypotheses of Lemma \ref{lem:phigradphidecreaseH}. 
Finally, note that computing $M$ takes time $O(n^{\omega-1})$ and all other matrix operations can be computed in time $O(n^\omega)$. Since we perform at most $O(\log{n})$ iterations, the running time is $\tilde{O}(mn^{\omega-1})$, as desired.
\end{proof}



\section{Computing an Approximate John Ellipsoid}

It turns out that our algorithm implicitly computes an approximate John ellipsoid for the considered cone $P$, which gives us geometric insight into $P$.
Recall that a classical theorem of John \cite{johnellipsoid} shows that for any closed, convex set $Q \subseteq \setR^n$, 
there is an ellipsoid $E$ and a center $z$ so that $z + E \subseteq Q \subseteq z + nE$. The bound of $n$
is tight in general --- for example for a simplex --- but it can be improved to $\sqrt{n}$ for symmetric sets.
This is equivalent to saying that for each convex body, there is a linear transformation that makes it
well \emph{$n$-well rounded}. Here, a body $Q$ is $\alpha$-well rounded if $z+r \cdot B \subseteq Q \subseteq z+\alpha\cdot r \cdot B$
for some center $z \in \setR$ and radius $r>0$.
See the excellent survey of Ball~\cite{AnElementaryIntroToConvexGeometry-Ball97} on this topic.

A summary of our full MWU algorithm with rescaling is given in Algorithm \ref{alg:fullalgdetailed}.
We will prove here that after a minor modification of the algorithm, the set $P \cap B$ 
will be well rounded when the algorithm terminates. 

\begin{algorithm}
\caption{\label{alg:fullalgdetailed}}
\noindent FOR $O(n\log(\frac{1}{\rho}))$ phases DO
\begin{itemize} \vspace{-2mm}
\item {\bf MWU phase:} 
  \begin{enumerate}[nolistsep]
  \item[(1)] Normalize $\|A_i\|_2=1$ for all $i$, and set $x^{(0)} := \bm{0}$ 
  \item[(2)] FOR $t := 0$ TO $T$ DO
     \begin{enumerate}[nolistsep]
     \item[(3)] Set $\lambda_i^{(t)} := \frac{1}{\Phi(x^{(t)})} \exp(-\left<A_i,x^{(t)}\right>)$
     \item[(4)] If $\Phi(x^{(t)}) < 1$ THEN RETURN $x^{(t)} \in P$
     \item[(5)] IF $\|\lambda^{(t)}A\|_2 \leq \frac{\beta}{n}$ THEN GOTO Rescaling phase
     \item[(6)] Select an \emph{update vector} $p^{(t)} \in \setR^n$ with $\|p^{(t)}\|_2 \leq 1$
     \item[(7)] Select a \emph{step size} $0<\varepsilon_t \leq 1$
     \item[(8)] Update $x^{(t+1)} := x^{(t)} + \varepsilon_tp^{(t)}$ \vspace{-2mm}
     \end{enumerate}
  \end{enumerate} 
\vspace{2mm}
\item {\bf Rescaling phase:}  
  \begin{enumerate}[nolistsep]
  \item[(1)] Compute an invertible linear transformation $F$ so that $\vol(F(P)\cap B)$ is a constant factor larger than $\vol(P\cap B)$. Replace $P$ by $F(P)$. \vspace{-2mm} 
  \end{enumerate}
\end{itemize}
\end{algorithm}


\begin{lemma}
Consider Algorithm \ref{alg:fullalgdetailed} with the modification that the MWU phase terminates in step (4) only if $\Phi(x) < \frac{1}{e}$. 
Then in the final iteration $P \cap B$ is $\tilde{O}(n)$-well rounded. 
\end{lemma}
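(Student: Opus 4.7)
The plan is to extract a large ball inside $P\cap B$ directly from the final iterate $x$ produced by the algorithm. The modified termination condition $\Phi(x)<1/e$ expands to $\sum_{i=1}^m e^{-\langle A_i,x\rangle}<1/e$, which forces $\langle A_i,x\rangle>1$ for every row $A_i$. Since the rows are unit vectors, this already says that $x$ sits ``deep'' inside the cone: any point within Euclidean distance $1$ of $x$ remains in $P$. The only remaining task is to rescale $x$ so that the surrounding ball also fits inside $B$.

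The key quantitative ingredient is an a priori bound on $\|x\|_2$. In the final MWU phase we restart at $x^{(0)}=\bm 0$, and by (the mild extension of) Lemma \ref{lem:mwuphase} the phase terminates after at most $T=\tilde O(n)$ iterations; each iteration moves $x$ by $\varepsilon_t p^{(t)}$ with $\|p^{(t)}\|_2\leq 1$ and $\varepsilon_t\leq 1$, so by the triangle inequality $\|x\|_2\leq T=\tilde O(n)$. Setting $C:=\|x\|_2+1=\tilde O(n)$ and $z:=x/C$, we have $\|z\|_2<1$ and $\langle A_i,z\rangle>1/C$ for all $i$, and a one-line Cauchy--Schwarz using $\|A_i\|_2=1$ shows $B(z,1/C)\subseteq P\cap B$. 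For the outer containment it suffices to use the trivial inclusion $P\cap B\subseteq B\subseteq B(z,2)$. The ratio of outer to inner radius is $2C=\tilde O(n)$, which is exactly the claimed well-roundedness.

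The only mild obstacle is to justify that the iteration count from Lemma \ref{lem:mwuphase}, which was stated for the threshold $\Phi(x)<1$, still delivers $\tilde O(n)$ iterations under the tightened threshold $\Phi(x)<1/e$. This follows because both the standard and modified gradient descent analyses (Lemmas \ref{lem:phidecrease} and \ref{lem:mwuphase}) bound the number of steps by $O(\log(\Phi(\bm 0)/c)\cdot n)=O(\log(em)\cdot n)$ when the goal is $\Phi(x)<c$; tightening $c$ from $1$ to $1/e$ adds only a constant to the logarithm. With this adjustment the argument above is complete.
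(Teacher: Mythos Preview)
Your proof is correct and follows essentially the same approach as the paper: deduce $\langle A_i,x\rangle>1$ from $\Phi(x)<1/e$, bound $\|x\|_2$ by the number of iterations $T=\tilde O(n)$, and take a scaled copy of $x$ as the center of the inscribed ball. The only cosmetic differences are that the paper divides by $T$ (using the step-size bound $\varepsilon_t\le 1/2$) while you divide by $\|x\|_2+1$, and that you use the outer containment $P\cap B\subseteq B(z,2)$, which is in fact the cleaner bound (the paper's stated outer radius of $1$ is slightly loose, since only $\|w-z\|_2\le \|w\|_2+\|z\|_2\le 3/2$ follows from $\|z\|_2\le 1/2$).
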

\begin{proof}
Let us consider the last phase of the algorithm and let $x^{(0)},\ldots,x^{(T)}$ be the computed sequence of points 
with $\Phi(x^{(T)}) < \frac{1}{e}$ and $T \leq \tilde{O}(n)$. Then $e^{-\left<A_i,x^{(T)}\right>} < \frac{1}{e}$ and hence
$\left<A_i,x^{(T)}\right> \geq 1$ for all $i$. The step size of the algorithm is always bounded by $1/2$, hence
$\|x^{(T)}\|_2 \leq \frac{T}{2}$. Now define $z := \frac{1}{T} \cdot x^{(T)}$ as center. Then $\|z\|_2 \leq \frac{1}{2}$
and $\left<A_i,z\right> \geq \frac{1}{T}$. Hence $B(z,\frac{1}{T}) \subseteq P \cap B \subseteq B(z,1)$, which shows 
that $P\cap B$ is $T$-well rounded.
\end{proof}
Note that running the algorithm until $\Phi(x) < \frac{1}{e}$
only increases the worst case running times by a constant factor. 
Alternatively one can run the algorithm with standard gradient descent and a fixed step size of $\varepsilon := \Theta(\frac{1}{n})$ and 
only terminate when $\Phi(x) < \frac{1}{m}$. This increases the running time by up to a factor of $n$, but the final set $P \cap B$ will be $O(n)$-well rounded, thus removing the logarithmic terms suppressed by the $\tilde{O}$ notation. 
On the other hand, no linear transformation can make the conic hull of a simplex $o(n)$-well rounded, 
hence our obtained bound is asymptotically optimal. Note that to obtain the 
tight factor for well-roundedness it was crucial to have the optimal 
rescaling threshold of $\Delta = \Theta(\frac{1}{n})$.

\paragraph{Independent publication.}

The multi-rank rescaling was also discovered in a parallel and independent work by Dadush, Vegh and Zambelli~\cite{DBLP:journals/corr/DadushVZ16} (see their Algorithm~5).

\bibliographystyle{alpha}
\bibliography{mwu-lp}

\end{document}